\documentclass[times]{biomat_en}
\usepackage{natbib,epsfig,graphicx,graphics,amssymb,amsfonts,newlfont,float,indentfirst,fancyvrb,url,textcomp,subfigure}
\usepackage[centertags]{amsmath}
\usepackage{amsthm}
\usepackage[english]{babel}
\bibliographystyle{apalike}
\parindent 1cm

\newcommand{\bb}{\begin{equation}}
\newcommand{\ee}{\end{equation}}

\newtheorem{theorem}{Theorem}[section]
\newtheorem{lemma}[theorem]{Lemma}

\theoremstyle{definition}
\newtheorem{definition}[theorem]{Definition}
\theoremstyle{definition}
\begin{document}

\title{State Feedback as a Strategy for Control and Analysis of COVID-19}
\author
 {Leonardo R. S. Rodrigues\thanks{leonardo.rodrigues@ufma.br}\,,\\
    Natural Sciences, CCCO -- UFMA, 65.400-00, Cod\'o/MA.\\[3mm]
  Felipe Gabrielli\thanks{gabrielli14@gmail.com},\,\,
  \\
    Natural Sciences, CCCO -- UFMA, 65.400-00, Cod\'o/MA.}

\criartitulo

\runningheads{Rodrigues, L. \& Gabrielli, F.}{State Feedback as a Strategy for Control and Analysis of COVID-19}

\begin{abstract}

{\bf Abstract}. This paper presents a study on a compartmental epidemic model for COVID-19, examining the stability of its equilibrium points upon the introduction of vaccination as a strategy to mitigate the spread of the disease. Initially, the SIQR (Susceptible-Infectious-Quarantine-Recovered) mathematical model and its technical aspects are introduced. Subsequently, vaccination is incorporated as a control measure within the model scope. Equilibrium points and the basic reproductive number are determined, followed by an analysis of their stability. Furthermore, controllability characteristics and Optimal Control strategies for the system are investigated, supplemented by numerical simulations.
\end{abstract}

\begin{keyword}
{\bf Key words}: Optimal, Mathematical modelling, Ricatti, Equilibrium points, Vaccination, Numerical simulation.
\end{keyword}

\Section{Introdution}
Mathematical modeling has long been used as a tool in several areas of public health, including epidemiology, an area that developed significantly during the 20th century. Models in mathematical epidemiology, in particular, have been studied since the 18th century but had a development leap arguable from the work of Kermark and Mcckendrick \cite{kermark}, from 1927. Since Then, many other advances were made and many types of models were created and studied. Most of these models are compartmental models, that divide the population into categories with a particular behavior, some examples are SIS, SIR, SIRS, SEIR, and SIQR, among others. More details about some of these models can be seen in \cite{Brauer}. One important point to observe is that a mathematical model is always a simplification of reality, some aspects are disregarded so we can focus on the variables that really matter to the problem, no model can consider all aspects of a complex real problem, like the spread of infectious disease, hence the importance of each model type. Just to exemplify this reality simplification, the model studied in this paper does not consider population heterogeneity,  that is, all individuals are equally susceptible to the disease, there are models that take these differences into account as can be seen in \cite{science}.

We are currently living through the COVID-19 pandemic, a disease caused by the sars-cov-2 virus, which has already caused thousands of deaths around the world and continues to plague the population. Many researchers believe that COVID-19 will become an endemic disease in the future, but until the present date World Health Organization keeps classifying the threat level of COVID-19 as a pandemic, an interesting discussion about predicting the contention of the pandemic is done in \cite{rep_numb}. Several strategies have been adopted by governments to combat the spread of the disease, such as quarantine, lockdown, closing borders, use of masks, hand hygiene with alcohol gel, etc. But no measure is as effective as vaccination and since its development, in 2021, many countries have implemented a vaccination calendar as part of the disease combat strategies. Vaccination is the most effective and safe way we know to combat infectious diseases and it was responsible, for example, for eradicating smallpox.

Our objective in this work is to consider and analyze the properties of the SIQR model by adding vaccination as a strategy to control the growth of the disease, study the stability of constant solutions, calculate the basic reproductive number of disease propagation, study controllability of the system and the conditions for we obtain the Optimal Control and apply the model in some numerical simulations (using MATLAB\textsuperscript{\textregistered} software) to reach some conclusions about the control method (vaccination).

This analysis, via theoretical modeling,  is very important to complement the models that work with empirical data, to compare, complement and adjust possible strategies to face the disease, given that empirical data is not entirely trustful as can be seen in \cite{unreported} due to the number of unreported cases and the low number of tests in many countries.

\section{SIQR Mathematical Model}

In this section we will describe the mathematical model used to study the spreading of COVID-19, its elements, and technical features.

\bigskip

In order to study the spreading of infectious disease we have to consider a population whose size varies with time, we represent this population as $N(t)$. The mathematical model we will use is a SIQR compartmental model that divides the total population into four groups: susceptible ($S(t)$), infected ($I(t)$), quarantined ($Q(t)$) and recovered (sometimes also called removed) ($R(t)$), thus:

\begin{equation}\label{pop}
	N(t) = S(t) + I(t) + Q(t) + R(t)
\end{equation}

\bigskip

The model is the following system of ordinary differential equations:

\begin{equation}\label{sist_bas}
	\begin{cases}
		\frac{dS}{dt} = \Delta - \alpha SI - \mu S\\
		\frac{dI}{dt} = \alpha SI - (\gamma + \mu + \eta)I\\
		\frac{dQ}{dt} = (\eta - \epsilon)I - (\rho + \mu)Q\\
		\frac{dR}{dt} = \gamma I + \rho Q - \mu R
	\end{cases}\,.
\end{equation}

Such that:

\begin{table}[ht]
	\centering
	\begin{tabular}{cl}
		\hline
		\textbf{Variable} & \textbf{Definition} \\
		\hline
		$T$ & Time  \\
		$\alpha$ & Effective contact rate between susceptible and infectious class \\
		$\gamma$ & Natural recovery rate \\
		$\mu$ & Natural death rate \\
		$\rho$ & Removed rate from quarantine to recovered \\
		$\epsilon$ & Disease-related death rate \\
		$\eta$ & Quarantine rate of the infectious class\\
		$\Delta$ & Recruitment rate of the population\\
		\hline
	\end{tabular}
	\caption{\label{tab_param}Model Parameters.}
\end{table}

The model works with the following dynamics: Each compartment has an initial portion of the population $N$, if we want to simulate the beginning  of the pandemic, for example, we can put $Q=0$, $R=0$ and even $I=1$ (representing patient zero). After that, the parameters will change the quantities of the population in each compartment at each time interval, adding or subtracting some portions. In the first equation, the susceptible population is increased by $\Delta$, then some portion (determined by $\alpha$) is subtracted from the susceptible and added to the Infectious group, still in the first equation another portion is subtracted due to $\mu$, the natural rate of death. The population of the second group is diminished as well by $\mu$ and by $\gamma$ and $\eta$, natural recovery rate and quarantine rate, respectively. The third and fourth equations follow the same dynamics,  in the third a portion (due to $\epsilon$) is subtracted representing the disease death rate and another portion is subtracted from there and added to the recovered group ($\rho$) representing the quarantine recovery rate.

\bigskip

In figure \ref{fig:fluxo} we have a schematic diagram showing the dynamics of the model:

\begin{figure}[H]
	\centering
	\includegraphics[width=0.8\textwidth]{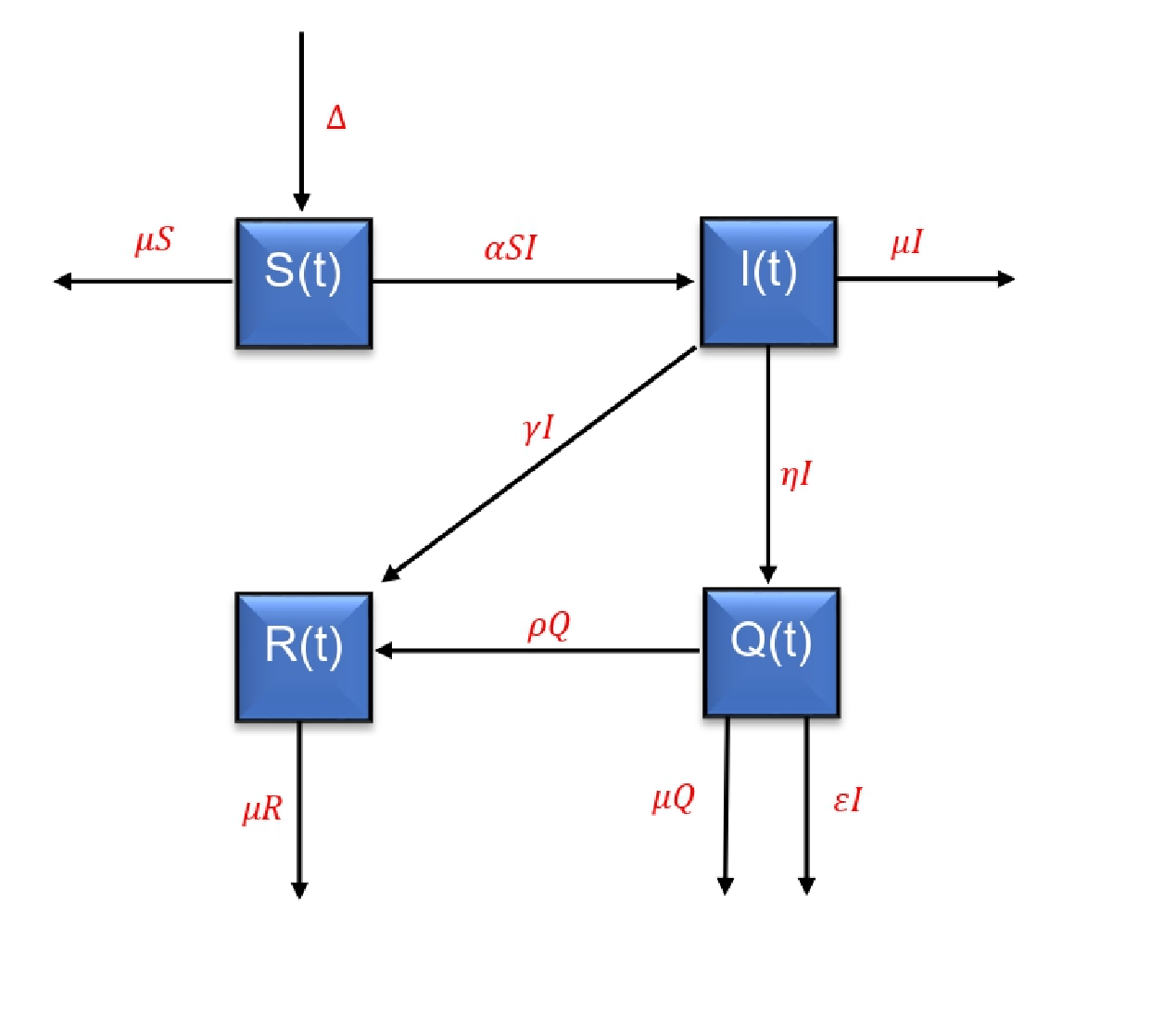}
	\caption{\label{fig:fluxo}SIQR's dynamic flow.}
\end{figure}

Some important observations about this model:

\begin{itemize}
	\item As already said, this is a homogeneous-mixing model;
	\item It uses only one disease-related death rate (some models have different death rates for infected and quarantine individuals, \cite{Leonardo} for example);
	\item This model does not consider an incubation period;
	\item It does not differentiate infected individuals with symptoms and without symptoms, they are all in the same group;
	\item In the quarantine compartment the individuals isolated are those who are infected, it does not consider the isolation of the healthy population.
\end{itemize}

\bigskip

More details about the properties of SIQR model can be found in \cite{aims}, \cite{Ma_global} and \cite{Elsevier}. In \cite{Leonardo} they use a slightly different SIQR model and even make simulations based on empirical data from local health authorities.

Now we complete our model by adding vaccination as a control agent for the system:

\begin{equation}\label{sist_vac}
	\begin{cases}
		\frac{dS}{dt} = \Delta - \alpha SI - \mu S - vS\\
		\frac{dI}{dt} = \alpha SI - (\gamma + \mu + \eta)I\\
		\frac{dQ}{dt} = (\eta - \epsilon)I - (\rho + \mu)Q\\
		\frac{dR}{dt} = \gamma I + \rho Q - \mu R
	\end{cases}\,.
\end{equation}

\bigskip

In system (\ref{sist_vac}) the $v$ parameter represents the presence of vaccination, the portion of the population who are vaccinated is subtracted from the susceptible group and added directly to the recovered group. It is easy to observe that the higher the percentage of vaccinated individuals, the lower the proportion of infected people and, consequently, the lower the number of deaths related to the disease. We now proceed to a qualitative analysis of the system of differential equations presented in (\ref{sist_vac}).

\Section{Metodology}

To study and analyze the stability of the disease-free equilibrium and endemic equilibrium points, we use the Routh-Hurwitz Criteria, the Lyapunov method and the La Salle Invariance Principle. 
To study and analyze the controllability of the control system, we use the Kalman criteria, we study the control problem with linear dynamics and quadratic objective function, we use the Ricatti equation to obtain the corresponding optimal controls. To solve the problems we use Runge-Kutta fourth order method and in numerical simulations we use MATLAB\textsuperscript{\textregistered} software.

\section{Equilibrium Points and Basic Reproductive Number}

In this section, we find the constant solutions (equilibrium points) of system (\ref{sist_vac}) and show the formula for the basic reproductive number.

\bigskip

We will call $R_{0}$ the basic reproductive number of the disease, which is an estimate of the contagiousness of the disease. This number is very important to health authorities and governments to devise strategies for facing the disease. To learn more about $R_{0}$ you can look at \cite{rep_numb} and \cite{keai}.
\begin{theorem} \label{theorem 3.1}
	The closed region $\Omega =\left\{  (S,I,Q,R) \in \mathbb{R}^{4}_{+}: N(t)\leq \frac{\Delta}{\mu}\right\}$ is positive invariant ste for the model (\ref{sist_vac}).
\end{theorem}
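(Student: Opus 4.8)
The plan is to verify the two defining properties of $\Omega$ separately: non-negativity of all four compartments, and the population bound $N(t)\le \Delta/\mu$. First I would show that $\R^4_+$ is itself positively invariant by examining the vector field of (\ref{sist_vac}) on each coordinate hyperplane and checking that the flow never points outward. On the face $S=0$ one has $\frac{dS}{dt}=\Delta\ge 0$; on $R=0$ one has $\frac{dR}{dt}=\gamma I+\rho Q\ge 0$; and the hyperplane $I=0$ is invariant because every term of $\frac{dI}{dt}$ carries the factor $I$. The delicate face is $Q=0$, where $\frac{dQ}{dt}=(\eta-\epsilon)I$, which points inward only when $\eta\ge\epsilon$; I would invoke this as the natural modelling hypothesis (the quarantine rate dominating the disease-death rate) so that $\dot Q\ge 0$ there. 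With the flow tangent-or-inward on every face, $\R^4_+$ is forward invariant.

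Next I would reduce the population bound to a scalar differential inequality. Adding the four equations of (\ref{sist_vac}), the $\pm\,\alpha SI$ terms cancel and most of the remaining coefficients collapse to a common factor $\mu$, giving
\begin{equation}\label{dNdt}
\frac{dN}{dt} = \Delta - \mu N - vS - \epsilon I.
\end{equation}
Since $S,I\ge 0$ on $\R^4_+$ and $v,\epsilon\ge 0$, the last two terms are non-positive, whence $\frac{dN}{dt}\le \Delta-\mu N$.

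The final step is a standard comparison argument applied to this inequality. Multiplying by the integrating factor $e^{\mu t}$ and integrating from $0$ to $t$ yields
\begin{equation}\label{Nbound}
N(t) \le \frac{\Delta}{\mu} + \left(N(0)-\frac{\Delta}{\mu}\right)e^{-\mu t},
\end{equation}
so that $N(0)\le \Delta/\mu$ forces $N(t)\le \Delta/\mu$ for all $t\ge 0$ (and $\limsup_{t\to\infty}N(t)\le \Delta/\mu$ even otherwise). Combining (\ref{Nbound}) with the forward invariance of $\R^4_+$ shows that any trajectory starting in $\Omega$ stays in $\Omega$, which is the claim. I expect the only real obstacle to be the non-negativity of $Q$: the coupling term $(\eta-\epsilon)I$ can have either sign, so the argument on that face is clean only under the structural assumption $\eta\ge\epsilon$ (or via a more careful invariance estimate). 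The population bound itself, by contrast, is routine once (\ref{dNdt}) is established.
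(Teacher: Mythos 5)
Your proposal is correct, and its analytic core --- the scalar differential inequality $N' \le \Delta - \mu N$ resolved by an integrating factor/comparison argument --- is exactly the paper's engine. The differences are in completeness, and they favor your version. First, the paper's proof of this theorem addresses \emph{only} the bound $N \le \Delta/\mu$; non-negativity of $(S,I,Q,R)$ is deferred to a separate positivity theorem, where it is obtained from exponential lower bounds of the form $S(t) \ge S(0)e^{-(\alpha I + \mu + v)t}$ rather than by your face-by-face inspection of the vector field on the boundary of $\mathbb{R}^{4}_{+}$. Your face-checking is the more standard invariance argument, and it surfaces a genuine structural point the paper never states: on the face $Q=0$ the flow points inward only when $\eta \ge \epsilon$, so non-negativity of $Q$ requires this hypothesis (the paper's simulation values $\eta = 0.2$, $\epsilon = 0.1$ satisfy it silently, and the paper's own positivity proof also glosses over the sign of $\eta - \epsilon$). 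Second, you compute $N' = \Delta - \mu N - vS - \epsilon I$ correctly, whereas the paper writes $N' = \Delta - \mu N - \epsilon I$, dropping the $-vS$ term (harmless for the inequality, but an oversight). Third, and most importantly, you draw the pointwise conclusion that $N(0) \le \Delta/\mu$ forces $N(t) \le \Delta/\mu$ for all $t \ge 0$, which is what positive invariance actually means; the paper instead concludes only that $\limsup_{t\to\infty} N(t) = \Delta/\mu$, an asymptotic (absorbing-set) statement that does not by itself establish invariance of $\Omega$. In short: same method, but your argument proves the theorem as stated, while the paper's proves a weaker asymptotic claim and outsources positivity to a later result.
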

\begin{proof}: To initiate our analysis we have to establish boundaries to our variables and parameters. All parameters described in table (\ref{tab_param})  are real non-negative numbers. For our variables we Take the derivative of $N(t)$ in expression (\ref{pop}):
	
	$$N'(t) = \Delta - \mu N - \epsilon I \leq \Delta - \mu N$$
	
	\bigskip
	
	Multipling by integration factor $e^{\mu t}$:
	
	$$N'(t) e^{\mu t} \leq \Delta e^{\mu t} - \mu N e^{\mu t}$$
	
	$$N'(t) e^{\mu t} \leq \Delta e^{\mu t} - \mu N e^{\mu t}$$
	
	Integrating both sides:
	
	$$\int_{o}^{t} [N'(t) e^{\mu t} + \mu N e^{\mu t}] \,dt \leq \int_{o}^{t} \Delta e^{\mu t} \,dt $$
	
	$$N(t) e^{\mu t} \biggr |_{0}^{t} \leq \Delta \frac{e^{\mu t} - 1}{\mu}\biggr |_{0}^{t} $$
	
	$$ e^{\mu t} N(t) - N(0) \leq \Delta \biggr (\frac{e^{\mu t} - 1}{\mu}\biggr)$$
	
	$$N(t) \leq N(0) e^{-\mu t}  + \frac{\Delta}{\mu} (1 - e^{-\mu t})$$
	
	When $x\rightarrow \infty$ we have:
	
	$$\lim_{x\rightarrow \infty} Sup[N(t)] = \frac{\Delta}{\mu}$$
	
	\bigskip
	
	This shows us that we have to study the problem in the region $D$:
	
	$$D = \left\{ (S,I,Q,R)| S \geq 0, I \geq 0, Q \geq 0, R \geq 0, S + I + Q + R \leq \frac{\Delta}{\mu} \right\}$$
	
\end{proof}
\subsection{Positivity and Boundedness}

\begin{theorem}
	Let $(S(0), I(0), Q(0), R(0))$ be non negative initial conditions, then the solutions $(S(t), I(t), Q(t), R(t))$ of the proposed model $(\ref{sist_vac})$ are positive for all $t>0.$
\end{theorem}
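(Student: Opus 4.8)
The plan is to exploit the structural fact that, when viewed along a fixed trajectory, each of the four equations in (\ref{sist_vac}) is \emph{linear} in its own unknown, with coefficients that are continuous functions of the remaining states. This lets me write each component via the integrating-factor (variation-of-parameters) formula and read off its sign directly. Note that Theorem \ref{theorem 3.1} only supplies the upper bound $N(t)\le \Delta/\mu$ and says nothing about the sign of the individual coordinates, so positivity must be argued separately. I would process the equations in the order $I$, $S$, $Q$, $R$, since this respects the way the inflow terms depend on already-signed quantities.

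First I would treat the $I$-equation. Rewriting $\tfrac{dI}{dt}=\big[\alpha S-(\gamma+\mu+\eta)\big]I$ and integrating the resulting scalar linear ODE (with the continuous coefficient $\alpha S(\tau)$ read off the actual trajectory) gives
\begin{equation}
I(t)=I(0)\,\exp\!\left(\int_{0}^{t}\big[\alpha S(\tau)-(\gamma+\mu+\eta)\big]\,d\tau\right).
\end{equation}
The exponential factor is strictly positive irrespective of the sign of its argument, so $I(0)\ge 0$ forces $I(t)\ge 0$ for all $t>0$ (with strict positivity whenever $I(0)>0$); crucially, this step needs no prior information on $S$, so there is no circularity. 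With $I\ge 0$ in hand, the $S$-equation reads $\tfrac{dS}{dt}=\Delta-(\alpha I+\mu+v)S$, and variation of parameters yields
\begin{equation}
S(t)=S(0)\,e^{-\int_{0}^{t}(\alpha I+\mu+v)\,d\tau}+\int_{0}^{t}\Delta\,e^{-\int_{s}^{t}(\alpha I+\mu+v)\,d\tau}\,ds,
\end{equation}
a sum of non-negative terms since $\Delta\ge 0$ and $S(0)\ge 0$; in fact $S(t)>0$ for $t>0$ because $\Delta>0$.

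Finally I would dispatch $Q$ and $R$ by the same device. For $Q$, the integrating factor $e^{(\rho+\mu)t}$ gives $Q(t)=Q(0)e^{-(\rho+\mu)t}+\int_{0}^{t}(\eta-\epsilon)I(s)\,e^{-(\rho+\mu)(t-s)}\,ds$, which is non-negative \emph{provided} $\eta\ge\epsilon$, i.e. provided the net inflow from the infectious class into quarantine is non-negative. This parameter sign condition is the one genuinely delicate point: it is the biologically natural assumption that the quarantine rate dominates the disease-death rate, and I would state it explicitly as a standing hypothesis on the model. Once $I,Q\ge 0$ are secured, the $R$-equation $\tfrac{dR}{dt}=\gamma I+\rho Q-\mu R$ has non-negative forcing $\gamma I+\rho Q$, so the integrating factor $e^{\mu t}$ immediately gives $R(t)\ge 0$. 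Combining the four steps establishes non-negativity (and strict positivity where the corresponding initial datum or recruitment term is positive) on the whole interval $t>0$, which is the assertion of the theorem.
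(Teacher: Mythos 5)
Your proof is correct, and it is worth noting that it is arranged more carefully than the paper's own argument, even though both rest on the same basic device of treating each equation as a scalar linear ODE and applying an integrating factor. The paper processes $S$, $Q$, $R$ first via differential inequalities that simply discard the inflow terms (e.g.\ it claims $Q(t)\geq Q(0)e^{-(\rho+\mu)t}$ directly from the $Q$-equation), and only afterwards addresses $I$ through a technical lemma --- but that lemma concerns \emph{boundedness} of $I$, not its sign, and its argument is informal; moreover, discarding the inflow $(\eta-\epsilon)I$ from the $Q$-equation silently presupposes both $I\geq 0$ (not yet proved at that point) and $\eta\geq\epsilon$ (never stated). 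Your ordering --- $I$ first via the exact exponential representation
\begin{equation*}
I(t)=I(0)\exp\!\left(\int_{0}^{t}\bigl[\alpha S(\tau)-(\gamma+\mu+\eta)\bigr]d\tau\right),
\end{equation*}
which needs no sign information on $S$, then $S$, $Q$, $R$ via variation of parameters keeping the inflow terms --- removes this circularity, and your explicit isolation of the hypothesis $\eta\geq\epsilon$ is a genuine contribution: without it the theorem as stated is false (take $Q(0)=0$, $I(0)>0$, $\eta<\epsilon$; then $Q'(0)<0$ and $Q$ immediately goes negative). In short, you buy rigor and an honest accounting of the assumptions; the paper's version is shorter but has real gaps at exactly the points you handled carefully.
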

\begin{proof} Consider the following from the model's first equation (\ref{sist_vac})
	\begin{eqnarray}\label{eq1} \nonumber
		\frac{dS}{dt}&=&\Delta - \alpha SI - \mu S - vS\\ \nonumber
		\frac{dS}{dt}     &\geq& - (\alpha I + \mu  + v)S\\ \nonumber
		S(t)&\geq& C e^{-(\alpha I+\mu+v)t}
	\end{eqnarray}
	
	Where $C = e^{C_1} $ is a constant determined by the initial conditions. Now, if $ S(0) $ is the initial condition, then $ S(0)= C $, so we can state that:
	\[ S(t) \geq S(0)e^{- (\alpha I + \mu + v)t} \]
	Therefore, the solution $S(t)$ is bounded above by $ S(0) $.\\
	Use the same justification, we can demonstrate that:
	\[Q(t)\geq Q(0)e^{-(\rho + \mu)t}\] and \[R(t)\geq R(0)e^{- \mu t}, \mbox{ for all $t>0$}.\]
	Now, we proof the lemma technical for show that $I(t)$ is bounded, in fact:
	\begin{lemma}
		Let the solution $S(t)$ be bounded below by $S(0)e^{-(\alpha I(t) + \mu + v)t}$ for all $t > 0$, where $S(0)$ is the initial condition. Then, the solution $I(t)$ of the differential equation
		\begin{equation*}
			\frac{dI}{dt} = \alpha SI - (\gamma + \mu + \eta)I
		\end{equation*}
		is bounded for all $t > 0$.
	\end{lemma}
	\begin{proof}
		We are given the constraint that $S(t) \geq S(0)e^{-(\alpha I(t) + \mu + v)t}$. This constraint means that the value of $S(t)$ will never decrease below \\
		$S(0)e^{-(\alpha I(t) + \mu + v)t}$. Under this condition, any increase in $I(t)$ that would cause $S(t)$ to decrease below the specified limit contradicts the imposed constraint on the dynamics of $S(t)$. This is a direct consequence of the interdependence between $S(t)$ and $I(t)$, as described by the equations (\ref{sist_vac})\\
		Therefore, the solution $I(t)$ is bounded, ensuring that $S(t)$ remains above the specified limit for all $t > 0$.
	\end{proof}
	This demonstrates that system (\ref{sist_vac}) solution is positive for all $t>0$. As a result suggested model epidemiologically significant and mathematically we posed in the domain $\Omega.$
\end{proof}
Set the right side of system (\ref{sist_vac}) to zero and we have:

\begin{equation}\label{sist_zero}
	\begin{cases}
		\Delta - \alpha SI - \mu S - vS = 0\\
		\alpha SI - (\gamma + \mu + \eta)I = 0\\
		(\eta - \epsilon)I - (\rho + \mu)Q = 0\\
		\gamma I + \rho Q - \mu R = 0
	\end{cases}\,.
\end{equation}

\bigskip

Now we establish the existence of a disease-free constant solution and a endemic constant solution:

\begin{theorem}
	For system (\ref{sist_zero}), there is always a disease-free equilibrium $E_{0}$, and there is also an unique endemic equilibrium $E^{*}$.
\end{theorem}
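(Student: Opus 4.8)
The plan is to locate every constant solution by setting the right-hand side of the vaccinated system to zero, i.e.\ by solving system~(\ref{sist_zero}). The key structural observation is that these four algebraic equations decouple in a convenient order: the second equation alone controls whether we are in the disease-free or the endemic regime, and once the pair $(S,I)$ is pinned down the remaining two equations determine $Q$ and $R$ by simple back-substitution.

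First I would produce the disease-free equilibrium by imposing $I=0$. This satisfies the second equation identically, reduces the first to $\Delta-(\mu+v)S=0$, and forces $Q=0$ and $R=0$ through the third and fourth equations. Since $\mu+v>0$, the point
$$E_{0}=\left(\frac{\Delta}{\mu+v},\,0,\,0,\,0\right)$$
is well defined for any admissible parameters, which is exactly the content of the word ``always'' in the statement.

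Next I would treat the endemic case by assuming $I\neq 0$. Cancelling $I$ in the second equation gives the unique value $S^{*}=(\gamma+\mu+\eta)/\alpha$; inserting this into the first equation and solving for $I$ yields the single value
$$I^{*}=\frac{\alpha\Delta-(\mu+v)(\gamma+\mu+\eta)}{\alpha(\gamma+\mu+\eta)}.$$
The third and fourth equations then deliver $Q^{*}=(\eta-\epsilon)I^{*}/(\rho+\mu)$ and $R^{*}=(\gamma I^{*}+\rho Q^{*})/\mu$. Uniqueness is immediate: every coordinate is obtained from the previous ones by a single explicit algebraic relation, so no freedom of choice survives and $E^{*}=(S^{*},I^{*},Q^{*},R^{*})$ is the only endemic candidate.

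The step I expect to require the most care is not the algebra but the feasibility of $E^{*}$, that is, showing that this candidate actually lies in the biologically relevant region. The expression for $I^{*}$ is positive precisely when $\alpha\Delta>(\mu+v)(\gamma+\mu+\eta)$, which is the threshold condition $R_{0}>1$ for the basic reproductive number $R_{0}=\alpha\Delta/[(\mu+v)(\gamma+\mu+\eta)]$. I would therefore state the existence of the endemic equilibrium under the hypothesis $R_{0}>1$, and check that in this regime $S^{*}<\Delta/(\mu+v)$, so that $E^{*}$ remains inside the invariant set $\Omega$ of Theorem~\ref{theorem 3.1}.
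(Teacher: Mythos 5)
Your proposal is correct and follows essentially the same route as the paper: factor the second equation of (\ref{sist_zero}), take $I=0$ to obtain $E_{0}$, and otherwise cancel $I$ to get $S^{*}$, then recover $I^{*}$, $Q^{*}$, $R^{*}$ by back-substitution, with uniqueness immediate from the explicit formulas. Your additional feasibility check (that $I^{*}>0$ exactly when $R_{0}>1$) is not part of the paper's proof itself, but matches the remark the paper places immediately after it, where that threshold condition motivates the definition of $R_{0}$.
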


\begin{proof}[Proof]
	Observing the second equation in system (\ref{sist_zero}) we have the product:
	
	\begin{equation}
		I[\alpha S - (\gamma + \mu + \eta)] = 0
	\end{equation}
	
	\bigskip
	
	If we have $I = 0 \Rightarrow Q = 0$, $S = \frac{\Delta}{\mu + v}$ and $R = 0$
	
	Thus, the point $E_{0} = \biggr( \frac{\Delta}{\mu + v},0,0,0\biggr)$ is a solution called free-disease solution.
	
	\bigskip
	
	If $[\alpha S - (\gamma + \mu + \eta)] = 0$ then:
	
	$$S^{*} = \frac{\gamma + \mu + \eta}{\alpha}$$
	
	Replacing $S^{*}$ in the first equation of (\ref{sist_zero}) we get:
	
	$$\Delta - (\gamma + \mu + \eta) \biggr(I^{*} + \frac{\mu + v}{\alpha}\biggr) = 0$$
	
	$$I^{*} = \frac{\Delta}{\gamma + \mu + \eta} - \frac{\mu + v}{\alpha}$$
	
	$$I^{*} = \frac{\mu + v}{\alpha} \biggr( \frac{\Delta}{\mu + v} \frac{\alpha}{\gamma + \mu + \eta} - 1 \biggr)$$
	
	Proceeding in the same way in the third and fourth equations, we obtain:
	
	$$Q^{*} = \frac{(\eta - \epsilon) I^{*}}{\rho + \mu}$$
	
	$$R^{*} = \frac{ \gamma I^{*} + \rho Q^{*}}{\mu}$$
	
	$E^{*} = (S^{*},I^{*},Q^{*},R^{*})$ is called the endemic solution.
\end{proof}

Looking at the expression of $I^{*}$ it is important to note that the equilibrium point $E^{*}$ only occurs if: 

$$\biggr( \frac{\Delta}{\mu + v} \frac{\alpha }{\gamma + \mu + \eta} - 1 \biggr) > 1$$

So, we define:
\begin{definition}
	The Basic reproduction number for sistem $(\ref{sist_vac})$, denoted by $R_{0}$, is given as:
	\begin{equation}\label{r0}
		R_{0} = \biggr(\frac{\Delta}{\mu + v} \frac{\alpha}{\gamma + \mu + \eta} \biggr).
	\end{equation}
	which represents the average number of new infections generated by an infectious case in a susceptible population.
\end{definition}

The expression (\ref{r0}) is essential to system (\ref{sist_vac}), if $R_{0} > 1$ then the solution converge to the endemic equilibrium, on the other hand, if $R_{0} < 1$ then the solution converge to the free-disease equilibrium. 

The value of $R_{0}$ for COVID-19 is estimated to be between $1.9$ and $6.5$ according to \cite{rep_numb}.

\section{Stability of Equilibrium Points}

In this section we will state and prove the theorems that establish the stability of the solutions found in the previous section.

\begin{theorem}\label{theorem 4.1}
	If $R_{0} < 1$, the disease-free equilibrium $E_{0}$ of the system (\ref{sist_vac}) is locally asymptotically stable. If $R_{0} > 1$,the disease-free equilibrium $E_{0}$ is unstable.
\end{theorem}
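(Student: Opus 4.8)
The plan is to establish both claims by the standard method of linearized stability: I would compute the Jacobian of the right-hand side of (\ref{sist_vac}), evaluate it at the disease-free point $E_0=\left(\frac{\Delta}{\mu+v},0,0,0\right)$, and read off the signs of the real parts of its eigenvalues. By the principle of linearized stability, $E_0$ is locally asymptotically stable when every eigenvalue has negative real part and unstable when at least one eigenvalue has positive real part. The whole argument will hinge on isolating the single eigenvalue whose sign is not fixed by the structure of the system.

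First I would form the Jacobian
\[
J(S,I,Q,R)=\begin{pmatrix}
-\alpha I-\mu-v & -\alpha S & 0 & 0\\
\alpha I & \alpha S-(\gamma+\mu+\eta) & 0 & 0\\
0 & \eta-\epsilon & -(\rho+\mu) & 0\\
0 & \gamma & \rho & -\mu
\end{pmatrix}.
\]
Substituting $I=0$ and $S=\frac{\Delta}{\mu+v}$ annihilates the $(2,1)$ entry, so $J(E_0)$ is lower block–triangular with its first column supported only on the diagonal. The key observation is that the characteristic polynomial then factors completely: expanding along the first column and noting that the remaining $3\times3$ minor is itself lower triangular gives
\[
\det\bigl(J(E_0)-\lambda I\bigr)=(-\mu-v-\lambda)\Bigl(\tfrac{\alpha\Delta}{\mu+v}-(\gamma+\mu+\eta)-\lambda\Bigr)(-(\rho+\mu)-\lambda)(-\mu-\lambda).
\]
Hence the eigenvalues are exactly the diagonal entries $\lambda_1=-\mu-v$, $\lambda_3=-(\rho+\mu)$, $\lambda_4=-\mu$, together with $\lambda_2=\frac{\alpha\Delta}{\mu+v}-(\gamma+\mu+\eta)$.

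Since the biological rates are positive, $\lambda_1,\lambda_3,\lambda_4$ are automatically negative, so the entire stability question collapses to the sign of the one remaining eigenvalue $\lambda_2$. The final step is to rewrite, using definition (\ref{r0}),
\[
\lambda_2=(\gamma+\mu+\eta)\left(\frac{\alpha\Delta}{(\mu+v)(\gamma+\mu+\eta)}-1\right)=(\gamma+\mu+\eta)\,(R_0-1),
\]
so that $\lambda_2<0$ precisely when $R_0<1$ and $\lambda_2>0$ precisely when $R_0>1$. This yields local asymptotic stability in the first case and instability in the second.

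I expect essentially no computational obstacle: the only real insight is recognizing that $J(E_0)$ is triangular, so the eigenvalues can be read off directly and no Routh--Hurwitz argument is needed. The one point requiring care is the borderline case $R_0=1$, where $\lambda_2=0$ and linearization is inconclusive; I would simply note that this degenerate case lies outside both hypotheses of the theorem and therefore need not be addressed here.
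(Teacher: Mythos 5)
Your proof is correct and takes essentially the same route as the paper: linearize at $E_0$, exploit the (block-)triangular structure of $J(E_0)$ to read off the eigenvalues $-\mu-v$, $-(\rho+\mu)$, $-\mu$, and $\lambda_2$, and reduce the whole question to the sign of $\lambda_2$ via $R_0$. In fact your prefactor is the correct one: $\lambda_2=(\gamma+\mu+\eta)(R_0-1)$, whereas the paper writes $\lambda_2=\frac{\alpha}{\gamma+\mu+\eta}(R_0-1)$, an apparent typo that does not affect the sign and hence leaves the stated conclusion intact.
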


\begin{proof}[Proof]
	
	The Jacobian matrix of system $(\ref{sist_vac})$ at $E_{0}$ is:
	
	$$J(E_{0}) = \left[\begin{array}{cccc} - \mu - v & -\frac{\alpha \Delta}{\mu + v}  & 0 & 0 \\ 0 & \frac{\alpha \Delta}{\mu + v}  - (\gamma + \mu + \eta) & 0 & 0 \\ 0 & (\eta - \epsilon) & -(\rho + \mu) & 0 \\ 0 & \gamma & \rho & - \mu \end{array}\right]$$
	
	The four eigenvalues of matrix $J(E_{0})$ are:
	
	$$\lambda_{1} = - \mu - v, \lambda_{2} = \frac{\alpha}{\gamma + \mu + \eta} (R_{0} - 1), \lambda_{3} = -(\rho + \mu), \lambda_{4} = -\mu$$
	
	If $R_{0} < 1 \Rightarrow \lambda_{2} < 0$, therefore, all eigenvalues have negative real parts and $E_{0}$ is locally asymptotically stable. If $R_{0} > 1 \Rightarrow \lambda_{2} >0$, thus,  $E_{0}$ is unstable.
\end{proof}

\bigskip

\begin{theorem}\label{theorem 4.2}
	If $R_{0} < 1$, the disease-free equilibrium $E_{0}$ of the system $(\ref{sist_vac})$ is globally asymptotically stable.
\end{theorem}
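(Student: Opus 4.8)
The plan is to follow the methodology's prescription and build a Lyapunov function combined with the LaSalle invariance principle, working inside the positively invariant, bounded region $\Omega$ guaranteed by Theorem \ref{theorem 3.1} (so that every trajectory has compact closure and LaSalle applies) and using the positivity of $S(t)$ established earlier (so that the logarithm and the ratio below are well defined). Writing $S_0 = \frac{\Delta}{\mu+v}$ for the susceptible coordinate of $E_0$, I would take the Volterra-type candidate
\[
V(S,I) = \left(S - S_0 - S_0\ln\frac{S}{S_0}\right) + I .
\]
The first summand is the standard function $x \mapsto x - S_0 - S_0\ln(x/S_0)$, which is nonnegative and vanishes only at $x = S_0$; since $I \geq 0$ vanishes only at $I=0$, $V$ is positive definite with respect to $E_0$ on $\Omega$ and $V(E_0)=0$.

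The computational heart is differentiating $V$ along (\ref{sist_vac}). Using $\Delta = (\mu+v)S_0$ to rewrite $\dot S = (\mu+v)(S_0 - S) - \alpha S I$, the $\alpha S I$ cross-terms coming from $\dot S\,(1 - S_0/S)$ and from $\dot I$ cancel, and I expect the derivative to collapse to
\[
\dot V = -(\mu+v)\frac{(S-S_0)^2}{S} + \bigl(\alpha S_0 - (\gamma+\mu+\eta)\bigr)I .
\]
Recognizing $\alpha S_0 = (\gamma+\mu+\eta)R_0$ turns the bracket into $(\gamma+\mu+\eta)(R_0-1)$, so
\[
\dot V = -(\mu+v)\frac{(S-S_0)^2}{S} + (\gamma+\mu+\eta)(R_0-1)I .
\]
When $R_0 < 1$ both terms are nonpositive, giving $\dot V \leq 0$ throughout $\Omega$, with equality forcing $S = S_0$ and $I = 0$.

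Finally I would invoke LaSalle. The set $\{\dot V = 0\}$ is $\{S=S_0,\ I=0\}$, and I must check that the largest invariant subset it contains is exactly $\{E_0\}$: the coordinates $Q$ and $R$ do not appear in $V$, so they are controlled downstream. On any invariant trajectory with $I\equiv 0$ the third equation becomes $\dot Q = -(\rho+\mu)Q$, forcing $Q\to 0$, and then the fourth becomes $\dot R = -\mu R$, forcing $R\to 0$; hence the only invariant set is $E_0$, which therefore attracts every solution in $\Omega$. Combined with local stability from Theorem \ref{theorem 4.1}, this yields global asymptotic stability. The one genuinely creative step, and the main obstacle, is selecting the Volterra weight on $S$ together with the coefficient $1$ on $I$ so that the indefinite $\alpha S I$ terms cancel exactly; everything after that is routine algebra plus the standard LaSalle closure on the passive $Q,R$ compartments.
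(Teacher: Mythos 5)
Your proof is correct, but it takes a genuinely different route from the paper. The paper uses the much simpler Lyapunov function $\mathcal{L}(t) = I(t)$, whose derivative $[\alpha S - (\gamma+\mu+\eta)]I$ is then made nonpositive by replacing $S$ with the disease-free value $\frac{\Delta}{\mu+v}$; note that this substitution actually requires the bound $S \leq \frac{\Delta}{\mu+v}$, which does not follow directly from the invariant-region estimate $N \leq \frac{\Delta}{\mu}$ of Theorem \ref{theorem 3.1} (since $\frac{\Delta}{\mu+v} < \frac{\Delta}{\mu}$) and really holds only asymptotically — a point the paper glosses over by writing the step as an equality. Your Volterra-type function $V = S - S_0 - S_0\ln(S/S_0) + I$ sidesteps this issue entirely: the $\alpha SI$ cross terms cancel exactly, and your derivative identity $\dot V = -(\mu+v)\frac{(S-S_0)^2}{S} + (\gamma+\mu+\eta)(R_0-1)I$ (which I verified) holds on the whole positive orthant with no bound on $S$ needed, and in addition pins down $S = S_0$ on the equality set, making the LaSalle step cleaner than the paper's. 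Two small imprecisions on your side, neither fatal: (i) $V$ is not positive definite with respect to $E_0$ in $\mathbb{R}^4_+$ — it vanishes on the whole set $\{S=S_0,\, I=0\}$ — but LaSalle plus the local stability from Theorem \ref{theorem 4.1}, exactly as you invoke them, still deliver the conclusion; (ii) with $I \equiv 0$ the fourth equation reads $\dot R = \rho Q - \mu R$, not $\dot R = -\mu R$; it reduces to the latter only after you have argued $Q \equiv 0$ on the invariant set (e.g., because backward trajectories with $Q>0$ leave the compact region $\Omega$), so the cascade should be stated in that order. The paper's own treatment of the invariant set is looser still, so on this point your argument is, if anything, the more careful of the two.
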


\begin{proof}[Proof]
	
	Consider the following Lyapunov function:
	
	$$\mathcal{L}(t) = I(t)$$
	
	Calculating the derivative of $\mathcal{L}(t)$ along the positive solution of system $(\ref{sist_vac})$, it follows that:
	\begin{eqnarray}
		\frac{d\mathcal{L}(t)}{dt}\biggr|_{(3)} &=& \frac{dI}{dt}\biggr|_{(3)} = \alpha SI - (\gamma + \mu + \eta)I\\
		&=&  [\alpha S - (\gamma + \mu + \eta)]I\\
		&=& \biggr[ \alpha \frac{\Delta}{\mu + v} - (\gamma + \mu + \eta) \biggr] I\\
		&=& [(\gamma + \mu + \eta)(R_{0} - 1)]I \\
		&\leq &0
	\end{eqnarray}
	\bigskip
	
	Furthermore, $\mathcal{L}'=0$ only if $I=0$. The maximum invariant set in $\left\{(S,I,Q,R)|\mathcal{L}' = 0\right\}$ is the singleton ${E_{0}}$. When $R_{0} < 1$, according to LaSalle's invariance principle, it follows that:
	
	\bigskip
	
	\[ \lim_{t\to\infty} I(t) = 0\]
	
	\bigskip
	
	Then, we obtain the limit equations of the system $(\ref{sist_vac})$:
	
	$$
	\begin{cases}
		\frac{dS}{dt} = \Delta - \mu S - vS\\
		\frac{dQ}{dt} = - (\rho + \mu)Q\\
		\frac{dR}{dt} =  \rho Q - \mu R
	\end{cases}\,.
	$$
	
	\bigskip
	
	So the disease-free equilibrium is globally attractive in the region D. Therefore, the disease-free equilibrium of system $(\ref{sist_vac})$ is globally asymptotically stable when $R_{0} < 0$.
\end{proof}

\bigskip

\begin{theorem}\label{theorem 4.3}
	If $R_{0} > 1$, the endemic equilibrium $E^{*}$ of the system (\ref{sist_vac}) is locally asymptotically stable.
\end{theorem}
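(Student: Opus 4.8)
The plan is to determine the spectrum of the Jacobian matrix of system (\ref{sist_vac}) evaluated at the endemic equilibrium $E^{*}$ and to show that every eigenvalue has negative real part. First I would write down the Jacobian in general form; its $(2,2)$ entry is $\alpha S-(\gamma+\mu+\eta)$, and the decisive simplification is that the equilibrium identity $S^{*}=\frac{\gamma+\mu+\eta}{\alpha}$, obtained in the construction of $E^{*}$, forces this entry to vanish at $E^{*}$.

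Second, I would exploit the structure of the resulting matrix. Because the equations for $Q$ and $R$ do not feed back into those for $S$ and $I$, the Jacobian at $E^{*}$ is block lower triangular: an upper-left $2\times2$ block governs the $(S,I)$ dynamics, while the diagonal entries $-(\rho+\mu)$ and $-\mu$ arise from the $Q$ and $R$ equations. Hence the spectrum is the union of the two eigenvalues of the $2\times2$ block together with $\lambda_{3}=-(\rho+\mu)$ and $\lambda_{4}=-\mu$, both of which are manifestly negative.

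Third, I would apply the Routh--Hurwitz criterion (equivalently, the trace/determinant test) to the reduced block
$$J_{2}=\begin{bmatrix} -\alpha I^{*}-\mu-v & -(\gamma+\mu+\eta) \\ \alpha I^{*} & 0 \end{bmatrix}.$$
Its trace is $-\alpha I^{*}-\mu-v$ and its determinant is $\alpha I^{*}(\gamma+\mu+\eta)$. This is exactly where the hypothesis enters: the endemic existence result gives $I^{*}=\frac{\mu+v}{\alpha}(R_{0}-1)$, so $R_{0}>1$ guarantees $I^{*}>0$, making the trace strictly negative and the determinant strictly positive. These are precisely the conditions for both roots of the characteristic quadratic $\lambda^{2}+(\alpha I^{*}+\mu+v)\lambda+\alpha I^{*}(\gamma+\mu+\eta)=0$ to lie in the open left half-plane. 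With all four eigenvalues having negative real part, local asymptotic stability of $E^{*}$ follows from the linearization (Hartman--Grobman) theorem.

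I do not expect a genuine obstacle here, because the triangular structure decouples the problem into a trivial pair of diagonal eigenvalues and a single quadratic whose coefficients are easy to sign. The one point requiring care is the sign bookkeeping: one must verify that $\alpha S^{*}=\gamma+\mu+\eta$ really cancels the $(2,2)$ entry, and that the positivity of $I^{*}$ — the sole place the assumption $R_{0}>1$ is used — renders the trace of the reduced block strictly negative rather than merely nonpositive.
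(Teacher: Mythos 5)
Your proposal is correct and follows essentially the same route as the paper: evaluate the Jacobian at $E^{*}$, use the fact that the $Q$ and $R$ equations decouple to read off the eigenvalues $-(\rho+\mu)$ and $-\mu$, and apply Routh--Hurwitz to the remaining $2\times 2$ block. In fact your version is more careful than the paper's: the paper writes the $(2,1)$ entry of the Jacobian as $\alpha S^{*}$ when it should be $\alpha I^{*}$ (the derivative of $\alpha SI$ with respect to $S$), so its characteristic polynomial has constant term $\alpha^{2}S^{*2}$ where the correct value is $\alpha I^{*}(\gamma+\mu+\eta)=\alpha^{2}S^{*}I^{*}$, which is what you obtain. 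Your proof also makes explicit the one place the hypothesis $R_{0}>1$ actually enters --- guaranteeing $I^{*}>0$, hence a strictly negative trace and strictly positive determinant --- a point the paper's argument leaves implicit (and which its miscopied matrix obscures, since $\alpha^{2}S^{*2}>0$ holds regardless of the sign of $I^{*}$).
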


\begin{proof}[Proof]
	
	The Jacobian matrix of system $(\ref{sist_vac})$ at $E^{*}$ is:
	
	$$J(E^{*}) = \left[\begin{array}{cccc} - \alpha I^{*} - \mu - v & - \alpha S^{*} & 0 & 0 \\ \alpha S^{*} & 0 & 0 & 0 \\ 0 & (\eta - \epsilon) & -(\rho + \mu) & 0 \\ 0 & \gamma & \rho & - \mu \end{array}\right]$$
	
	\bigskip
	
	The two eigenvalues of matrix $J(E^{*})$ are:
	
	$$\lambda_{3} = - (\rho + \mu), \lambda_{4} = -\mu$$
	
	\bigskip
	
	The other two eigenvalues are also the eigenvalues of the following matrix:
	
	$$J^{*}(E^{*}) = \left[\begin{array}{cccc} - (\alpha I^{*} + \mu + v) & - \alpha S^{*} \\ \alpha S^{*} & 0 \end{array}\right]$$
	
	\bigskip
	
	which has the characteristic polynomial:
	
	$$\lambda^{2} + (\alpha I^{*} + \mu + v) \lambda + \alpha^{2} S^{*^2} = 0$$
	
	\bigskip
	
	which has all coefficients positive, applying the Routh-Hurwitz criterion we obtain that all eigenvalues of matrix $J(E^{*})$ have negative real parts and the endemic equilibrium $E^{*}$ is locally asymptotically stable.
\end{proof}

\begin{theorem}\label{theorem 4.4}
	If $R_{0} > 1$, the endemic equilibrium $E^{*}$ of the system (\ref{sist_vac}) is globally asymptotically stable.
\end{theorem}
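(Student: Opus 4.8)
The plan is to exploit the triangular (cascade) structure of the system: the equations for $S$ and $I$ form a closed two-dimensional subsystem, while the equations for $Q$ and $R$ are linear and driven by $I$ and $Q$. The hypothesis $R_{0}>1$ guarantees $I^{*}>0$, so $E^{*}$ lies in the interior of $\Omega$ and the logarithmic Lyapunov function below is well defined. First I would restrict attention to the interior of $\Omega$ (where $S,I>0$ for $t>0$, by the positivity theorem) and analyze the $(S,I)$ block in isolation.

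For that block I would use a Goh--Volterra Lyapunov function
$$V(S,I) = \left(S - S^{*} - S^{*}\ln\frac{S}{S^{*}}\right) + \left(I - I^{*} - I^{*}\ln\frac{I}{I^{*}}\right),$$
which is positive definite about $E^{*}$ and vanishes only there. Differentiating along trajectories of $(\ref{sist_vac})$ and eliminating the parameters via the equilibrium identities $\Delta = \alpha S^{*}I^{*} + (\mu+v)S^{*}$ and $\alpha S^{*} = \gamma+\mu+\eta$, the indefinite $SI$ and $S^{*}I$ terms cancel and the expression collapses to
$$\dot V = -\,\frac{(\mu + v + \alpha I^{*})}{S}\,(S - S^{*})^{2} \le 0,$$
where the inequality $2 - S^{*}/S - S/S^{*}\le 0$ (AM--GM) is used for the contact-term contribution. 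Equality holds only on $\{S = S^{*}\}$, and substituting $S\equiv S^{*}$ into $\dot S = 0$ forces $I = I^{*}$, so $\{E^{*}\}$ is the largest invariant subset of $\{\dot V = 0\}$. By LaSalle's invariance principle this yields $(S(t),I(t)) \to (S^{*},I^{*})$.

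I would then promote this to the full state by a cascade argument. Since $I(t)\to I^{*}$, the equation $\dot Q = (\eta-\epsilon)I - (\rho+\mu)Q$ is an asymptotically autonomous scalar linear equation with stable homogeneous part $-(\rho+\mu)<0$, so $Q(t)\to Q^{*} = (\eta-\epsilon)I^{*}/(\rho+\mu)$; feeding both limits into $\dot R = \gamma I + \rho Q - \mu R$ gives $R(t)\to R^{*} = (\gamma I^{*} + \rho Q^{*})/\mu$ by the same reasoning. Together with the local asymptotic stability established in Theorem \ref{theorem 4.3}, this proves global asymptotic stability of $E^{*}$.

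The main obstacle is the algebraic reduction of $\dot V$: one must insert both equilibrium identities at precisely the right places so that the cross terms cancel and the remainder factors as a negative multiple of $(S-S^{*})^{2}/S$. A secondary technical point is justifying the cascade step rigorously—either by invoking the theory of asymptotically autonomous systems or by solving the linear $Q$- and $R$-equations through variation of parameters and passing to the limit—while noting that positivity of $Q^{*}$ on $\Omega$ tacitly requires $\eta\ge\epsilon$.
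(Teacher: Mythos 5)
Your proposal is correct, but it follows a genuinely different route from the paper. The paper applies LaSalle to a single scalar quantity built from the total population: it takes $\mathcal{L}=\frac{1}{2}\left[(S-S^{*})+(I-I^{*})+(Q-Q^{*})+(R-R^{*})\right]^{2}=\frac{1}{2}(N-N^{*})^{2}$, bounds $\dot{\mathcal{L}}\le (N-\Delta/\mu)(\Delta-\mu N)=-\frac{1}{\mu}(\Delta-\mu N)^{2}$ using $N'\le \Delta-\mu N$, and then asserts that $\dot{\mathcal{L}}=0$ forces $(S,I,Q,R)=(S^{*},I^{*},Q^{*},R^{*})$. Your Goh--Volterra function on the $(S,I)$ block followed by the cascade for $(Q,R)$ is not only different, it is the stronger argument: the paper's function vanishes on the entire hyperplane $N=N^{*}$, so it is not positive definite about $E^{*}$, its derivative vanishes wherever $\dot N=0$ (a set much larger than the singleton $\{E^{*}\}$), and the inequality step implicitly identifies $N^{*}$ with $\Delta/\mu$, whereas summing the equilibrium equations gives $\mu N^{*}=\Delta-\epsilon I^{*}-vS^{*}<\Delta$. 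Your computation avoids all of these issues: with the two equilibrium identities $\Delta=\alpha S^{*}I^{*}+(\mu+v)S^{*}$ and $\alpha S^{*}=\gamma+\mu+\eta$, the cross terms do cancel exactly and $\dot V=-\left(\mu+v+\alpha I^{*}\right)(S-S^{*})^{2}/S$ (I checked the cancellation), LaSalle on the $(S,I)$ block then gives $(S,I)\to(S^{*},I^{*})$, and the stable linear $Q$- and $R$-equations with convergent inputs finish the proof. Your caveats are also the right ones and are absent from the paper: convergence to $E^{*}$ can only hold for initial data with $I(0)>0$ (otherwise solutions tend to $E_{0}$), so ``global'' must be read as global in the interior of $\Omega$, and $\eta\ge\epsilon$ is tacitly needed for $Q^{*}\ge 0$. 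The one point to make explicit if you write this up: LaSalle requires a compact positively invariant set, which you obtain by intersecting $\Omega$ with a sublevel set of $V$, since the logarithmic terms blow up as $S\to 0^{+}$ or $I\to 0^{+}$ and hence keep such sets away from the boundary.
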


\begin{proof} If $R_{0}>1$ we have that endemic equilibrium point values are given by  
	\begin{equation*}
		E^{*} = \biggr( \frac{\gamma + \mu + \eta}{\alpha},\frac{\mu + v}{\alpha}(R_{0} - 1 ),\frac{(\eta - \epsilon) I^{*}}{\rho + \mu},\frac{ \gamma I^{*} + \rho Q^{*}}{\mu}\biggr).
	\end{equation*}
	Consider the following Liapunov function
	\begin{equation*}
		\mathcal{L}(t) = \frac{1}{2}\left[(S-S^{*}) + (I- I^{*})+ (Q-Q^{*})+(R-R^{*})\right]^{2}. 
	\end{equation*}
	Calculating the derivative of $\mathcal{L}(t)$ with respect $t$, follows that
	\begin{equation*}
		\frac{d \mathcal{L}}{dt} = \left[(S-S^{*}) + (I- I^{*})+ (Q-Q^{*})+(R-R^{*})\right]	\frac{d}{dt}\left[S + I+ Q+R\right],
	\end{equation*}
	since that $N(t)=S+I+Q+R$ and by Theorem $\ref{theorem 3.1}$, we have $	N'(t)\leq (\Delta-\mu N)$ and consequently $N(t)\leq \frac{\Delta}{\mu}$. So, 
	\begin{eqnarray}\nonumber
		\frac{d \mathcal{L}}{dt} &=&\left[(S-S^{*}) + (I- I^{*})+ (Q-Q^{*})+(R-R^{*})\right]	\frac{d N}{dt}\\ \nonumber
		&\leq & \left[(S-S^{*}) + (I- I^{*})+ (Q-Q^{*})+(R-R^{*})\right](\Delta-\mu N)\\ \nonumber
		&\leq&(N-\frac{\Delta}{\mu})(\Delta-\mu N)
	\end{eqnarray}
	Finally we get, 
	
	\begin{equation}
		\frac{d \mathcal{L}}{dt} \leq -\frac{1}{\mu}(\Delta-\mu N)^{2}.
	\end{equation}
	We can clearly determine that $\mathcal{L}'(t)$ is negative definite and $\mathcal{L}(t)$ is positive definite. Furthermore, $\frac{d \mathcal{L}}{dt}=0 $
	if and only if $S=S^{*}, I=I^{*}, Q=Q^{*}, R=R^{*}$. Therefore, the largest compact invariant set of $\left\lbrace \mathcal{L}'(t)=0  \right\rbrace $ is the singleton $E^{*}$. This shows that by the classical Lyapunov and La Salle invariance principle, $E^{*}$ is globally asymptotically stable.
	Thus, the system $(\ref{sist_vac})$ has a globally asymptotically stable solution $(S^{*}, I^{*}, Q^{*}, R^{*})$.

\end{proof}

\section{Control of finite dimensional linear systems}
Let $T > 0$. We consider the following finite dimensional system:
\begin{eqnarray}\label{eq2}
	x'(t) &= &Ax(t)+Bu(t), \; t \in [0,T]\\ \nonumber
	x(0)&=&x_{0},
\end{eqnarray}
where $A \in \mathbb{R}^{n,n}$,  $B \in \mathbb{R}^{n,m}$ are a real matrix, and $x_{0}$ a vector in $\mathbb{R}^{n}$. The function $x:[0, T] \rightarrow \mathbb{R}^{n}$ represents the state and $u : [0, T]\rightarrow \mathbb{R}^{m}$
the control. Both are vector functions of $m$ and $n$ components respectively depending exclusively on time $t$.\\
Given an initial datum $x_{0} \in \mathbb{R}^{n}$  and a vector function  $u \in L^{1}([0,T];\mathbb{R}^{m})$   , system
$(\ref{eq2})$ has a unique solution $C([0,T];\mathbb{R}^{n})$ characterized by the variation of constants formula:
\begin{equation}\label{eq3}
	x(t)= e^{At}x_{0}+\int_{0}^{t}e^{A(t-s)}Bu(s)ds,\; \forall t \in [0,T].
\end{equation}

\subsection{Kalman's controllability condition}
The following classical result is due to \cite{micu} and gives a complete answer to the problem of exact controllability of finite dimensional linear systems.
It shows, in particular, that the time of control is irrelevant.

We considering $A \in \mathbb{R}^{4,4}$ is Jacobian matrix of system $(\ref{sist_bas})$ at $E_{0}$ without the perturbation of control,  $B \in \mathbb{R}^{4,2}$ is a real matrix, and $x_{0}$ a vector in $\mathbb{R}^{4}$. The function $x:[0, T] \rightarrow \mathbb{R}^{4}$ represents the state and $u : [0, T]\rightarrow \mathbb{R}^{2}$
the control. Both are vector functions of $4$ and $2$ components respectively depending exclusively on time $t$. Let us use the shorthand notation $(A, B)$ to denote the control system $(\ref{eq2}).$

\begin{definition}
	A system $(A,B)$, for which the Kalman criterion condition is satisfied, is termed completely controllable.
\end{definition}

\begin{theorem}
	System $(\ref{eq2})$ is completely  controllable in some time $T$ if and only if
	\begin{equation}\label{eq4}
		rank [B, AB, A^{2}B,A^{3}B] = 4.
	\end{equation}
	Consequently, if system $(\ref{eq2})$ is controllable in some time $T > 0$ it is controllable in any time.
\end{theorem}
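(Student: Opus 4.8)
The plan is to reduce complete controllability to an algebraic property of a symmetric positive semidefinite matrix, the controllability Gramian, and then to relate its invertibility to the Kalman rank condition $(\ref{eq4})$. First I would fix an arbitrary target $x_{1}\in\mathbb{R}^{4}$ and read off from the variation of constants formula $(\ref{eq3})$ that steering $x_{0}$ to $x_{1}$ at time $T$ is equivalent to finding a control $u$ with $\int_{0}^{T}e^{A(T-s)}Bu(s)\,ds = x_{1}-e^{AT}x_{0}$. Since $x_{0}$ and $x_{1}$ range over all of $\mathbb{R}^{4}$, complete controllability in time $T$ is exactly the surjectivity onto $\mathbb{R}^{4}$ of the linear map $u\mapsto\int_{0}^{T}e^{A(T-s)}Bu(s)\,ds$.

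Next I would introduce the Gramian $W(T)=\int_{0}^{T}e^{As}BB^{\top}e^{A^{\top}s}\,ds$ and prove the key equivalence that this map is surjective if and only if $W(T)$ is invertible. For the ``if'' direction I would exhibit the explicit steering control $u(s)=B^{\top}e^{A^{\top}(T-s)}W(T)^{-1}(x_{1}-e^{AT}x_{0})$ and verify by direct substitution into $(\ref{eq3})$ that it produces $x(T)=x_{1}$. For the ``only if'' direction I would argue contrapositively: if $W(T)$ is singular, pick $0\neq\eta\in\ker W(T)$; then $\eta^{\top}W(T)\eta=\int_{0}^{T}\|B^{\top}e^{A^{\top}s}\eta\|^{2}\,ds=0$ forces $B^{\top}e^{A^{\top}s}\eta\equiv 0$ on $[0,T]$, so every vector in the range of the map is orthogonal to $\eta$, the range lies in the proper subspace $\eta^{\perp}$, and the system is not controllable.

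It then remains to identify invertibility of $W(T)$ with the rank condition $(\ref{eq4})$, and I expect the implication that $B^{\top}e^{A^{\top}s}\eta\equiv 0$ on $[0,T]$ entails $\eta^{\top}[B,AB,A^{2}B,A^{3}B]=0$ to be the main obstacle. The point is that $s\mapsto B^{\top}e^{A^{\top}s}\eta$ is real-analytic, so its vanishing on an interval forces all derivatives at $s=0$ to vanish, giving $B^{\top}(A^{\top})^{k}\eta=0$ for every $k\geq 0$; by the Cayley--Hamilton theorem every $A^{k}$ with $k\geq 4$ is a linear combination of $I,A,A^{2},A^{3}$, so recording the relations only for $k=0,1,2,3$ already captures all of them, which is precisely $\eta^{\top}[B,AB,A^{2}B,A^{3}B]=0$. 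Conversely, a nonzero $\eta$ annihilating the Kalman matrix feeds back through Cayley--Hamilton to give $B^{\top}e^{A^{\top}s}\eta\equiv 0$ and hence $\eta\in\ker W(T)$. This chain shows $W(T)$ is invertible exactly when the rank equals $4$.

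Finally, the ``consequently'' clause is immediate once the above equivalences are in place: the Kalman matrix $[B,AB,A^{2}B,A^{3}B]$ and its rank are purely algebraic objects that do not involve $T$, so controllability at one time $T>0$ forces the rank to be $4$, which in turn makes $W(T')$ invertible for every $T'>0$ and therefore yields controllability in arbitrary time.
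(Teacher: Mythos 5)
Your proposal is correct, and it is the classical Gramian-based proof of Kalman's criterion: controllability recast as surjectivity of the input-to-state map $u\mapsto\int_{0}^{T}e^{A(T-s)}Bu(s)\,ds$, equivalence of that surjectivity with invertibility of $W(T)$ (explicit steering control in one direction, the kernel/orthogonality argument in the other), and the identification of $\ker W(T)$ with the left kernel of $[B,AB,A^{2}B,A^{3}B]$ via analyticity of $s\mapsto B^{\mathbf{T}}e^{A^{\mathbf{T}}s}\eta$ and Cayley--Hamilton, which also delivers the time-independence clause for free. The paper's proof, however, is a genuinely different thing: it does not prove the stated equivalence at all. It treats the general criterion as a known classical result (attributed to the cited literature) and its ``proof'' consists of verifying the rank hypothesis for the one specific pair $(A,B)$ of interest --- $A$ the Jacobian of the uncontrolled SIQR system at $E_{0}$ and $B$ the given $4\times 2$ input matrix --- and even that verification is numerical: a MATLAB computation of $\mathrm{rank}\,[B,AB,A^{2}B,A^{3}B]$ at one fixed set of parameter values ($\mu=0.02$, $\gamma=0.1$, $\eta=0.2$, etc.), presented as Algorithm 1. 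So you prove the theorem as literally stated (the ``if and only if'' for arbitrary matrices with $n=4$), including the final implication about arbitrary control times, which the paper asserts but never derives; the paper instead establishes the concrete fact it needs downstream, namely that its SIQR system is controllable, and only at particular parameter values rather than symbolically for all admissible positive parameters. Your argument is the one that actually supports the theorem; the paper's computation would more accurately be labeled an application or corollary of it.
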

\begin{proof}
	In fact, from $(\ref{eq2})$ we had 
	\[ A = \begin{bmatrix} -\mu & 0 & 0 & 0 \\ 0 & - (\gamma + \mu + \eta) & 0 & 0 \\ 0 & (\eta - \epsilon) & -(\rho + \mu) & 0 \\ 0 & \gamma & \rho & -\mu \end{bmatrix} \]
	and \[ B = \begin{bmatrix} 1 & 0  \\ 0 & 1  \\ 0 & 0 \\ 0 & 0 \end{bmatrix}. \] with $\mu$, $\gamma$, $\eta$, $\epsilon$ e $\rho$ are positive constants. To determine the controllability of the system, we calculate the controllability matrix $W_{c}=[B, AB, A^{2}B,A^{3}B].$ Consequently, we obtain
	\begin{equation}
		rank\; W_{c} = 4.
	\end{equation}
	Therefore, the system is controllable.
	In conclusion, the matrix calculation is shown in \textbf{Algorithm 1}.
	\begin{table}[h!]
		\begin{tabular}{l}
			\noindent\rule{10cm}{1.5pt}\\
			\text{\textbf{Algorithm 1}. Solve the matrix $W_{c}$} \\
			\noindent\rule{10cm}{0.8pt}\\
			1:  \textbf{Constant values}\\
			2: mu = 0.02;\\
			3: alpha = 0.2;\\
			4: Delta = 0.2;\\
			5: gamma = 0.1;\\
			6: eta = 0.2;\\
			7: epsilon = 0.1;\\
			8: rho = 0.3;\\
			9: \textbf{ Definition of matrix A}\\
			10: A = [-mu , 0, 0, 0; 0, -(gamma + mu + eta), 0, 0;\\
			11: 0, eta - epsilon, -(rho + mu), 0;\\
			12: 0, gamma, rho, -mu];\\
			13: \textbf{Definition of matrix B }\\
			14: B = [1 0;0 1;0 0;0 0];\\
			15: \textbf{Controllability check}\\
			16: $Wc = [B, AB, A^2B, A^3B]$;\\
			17: $rank_Wc = rank(Wc)$;\\
			18: \textbf{Show the position of the $W_c$ matrix}\\
			19: $disp(['Posto\; da\; matriz\; Wc:\; ' num2str(rank_Wc)])$\\
			\noindent\rule{10cm}{0.8pt}\\
		\end{tabular}
	\end{table}
\end{proof} 

In Figure \ref{fig:2}, we observe that the number of columns in Matrix \( W_c \) equals the order of the system, indicating complete controllability. However, columns 1 and 2 exhibit singular values significantly different from zero, suggesting their importance in system controllability. Thus, the presence of these significant singular values implies that columns 1 and 2 of the controllability matrix are crucial for system controllability.

\newpage
\begin{figure}[ht]
	\centering
	\includegraphics[width=0.8\textwidth]{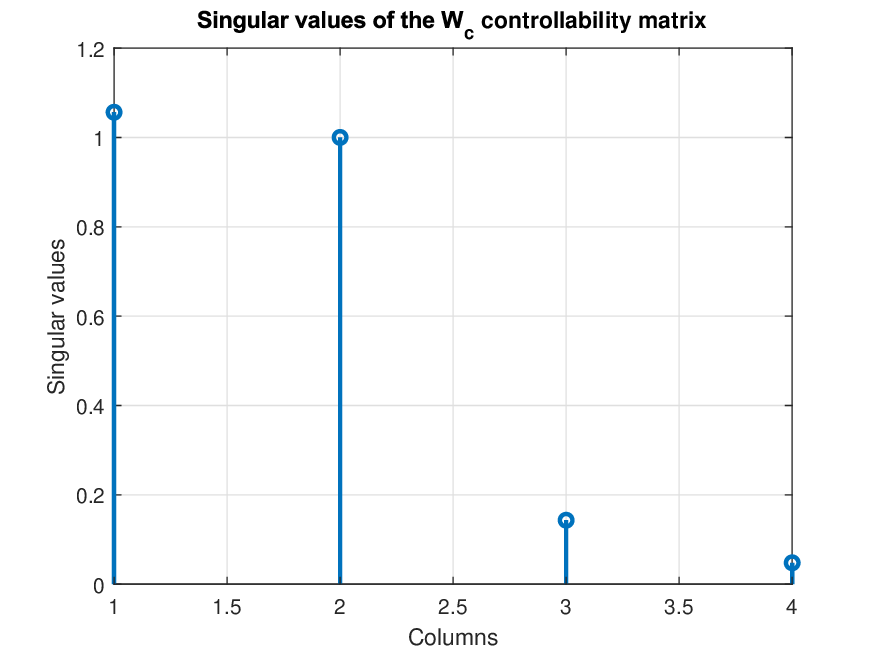}
	\caption{\label{fig:2} Controllabillity Matrix}
\end{figure}

In the context of working with an epidemiological model that provides insights into the spread of Covid-19, it is highly advantageous to employ output feedback and State Feedback Strategies to modify the dynamics of the free system, aiming to achieve properties such as full controllability, asymptotic stability, BIBO-stability, etc. At this stage of the research, we are interested in demonstrating that in the case of the autonomous linear control system. Given $A \in \mathbb{R}^{n,n}$ and $B \in \mathbb{B}^{n,m}$, we have
\begin{equation}\label{eq5}
	x'=Ax+Bu.
\end{equation}
We can utilize the state feedback strategy, where we assume that the control $u$ is derived from the state $x$ through a linear law, denoted as 
\begin{equation}\label{eq6}
	u=-Fx,
\end{equation}
where $F \in \mathbb{R}^{m,n}$ is the state feedback \textit{Gain Matrix}. Substituting them $(\ref{eq5})$, we get
\begin{equation}\label{eq7}
	x'=(A+BF)x.
\end{equation}

The subsequent outcome facilitates our examination of controllability concerns in the presence of disturbances within a completely controllable autonomous system.
\begin{theorem}\label{theorem 5.2}
	Let $(A,B)$ be a completely controllable autonomous system. Then, for every matrix $F \in \mathbb{R}^{2,4}$, the system $(A+BF, B)$ is also completely controllable.
\end{theorem}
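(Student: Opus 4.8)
The plan is to show that complete controllability is a property of the \emph{reachable subspace}, and that this subspace is left unchanged when we pass from $A$ to the closed-loop matrix $A_F := A + BF$. By the Kalman criterion stated above, $(A,B)$ is completely controllable if and only if $\operatorname{rank}[B, AB, A^2B, A^3B] = 4$, equivalently if and only if the column space $\operatorname{Im}[B, AB, A^2B, A^3B]$ is all of $\mathbb{R}^4$. Thus it suffices to prove the equality of column spaces
$$\operatorname{Im}[B, A_F B, A_F^2 B, A_F^3 B] = \operatorname{Im}[B, AB, A^2 B, A^3 B],$$
since the right-hand side has dimension $4$ by hypothesis, forcing the left-hand side to have dimension $4$ as well.

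First I would establish, by induction on $k$, the stronger family of identities
$$\operatorname{Im}[B, A_F B, \ldots, A_F^{k} B] = \operatorname{Im}[B, A B, \ldots, A^{k} B] =: V_k, \qquad 0 \le k \le 3.$$
The base case $k=0$ is immediate, since both sides equal $\operatorname{Im}(B)$. For the inductive step, I would expand
$$A_F^{k+1} B = (A + BF)\, A_F^{k} B = A\, (A_F^{k} B) + B\,(F A_F^{k} B).$$
By the inductive hypothesis every column of $A_F^k B$ lies in $V_k$, so I can factor $A_F^k B = [B, AB, \ldots, A^k B]\,M$ for some matrix $M$; applying $A$ then places every column of $A(A_F^k B)$ inside $\operatorname{Im}[AB, \ldots, A^{k+1}B] \subseteq V_{k+1}$, while the remaining term $B(FA_F^k B)$ has columns in $\operatorname{Im}(B) \subseteq V_{k+1}$. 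This yields the inclusion $\operatorname{Im}[B, A_F B, \ldots, A_F^{k+1}B] \subseteq V_{k+1}$.

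The reverse inclusion is the step I would be most careful about, and it is also the cleanest: I would exploit the symmetry $A = A_F + B(-F)$, so that $A$ is recovered from $A_F$ by the very same feedback construction with gain $-F$. Running the identical induction with the roles of $A$ and $A_F$ interchanged gives $V_{k+1} \subseteq \operatorname{Im}[B, A_F B, \ldots, A_F^{k+1}B]$, and the two inclusions combine to the desired equality. Taking $k = 3$ then delivers $\operatorname{rank}[B, A_F B, A_F^2 B, A_F^3 B] = \operatorname{rank}[B, AB, A^2 B, A^3 B] = 4$, so $(A+BF, B)$ is completely controllable.

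The only genuine subtlety to track is the matrix bookkeeping in the factorization $A_F^k B = [B, \ldots, A^k B]\,M$ (here $F \in \mathbb{R}^{2,4}$ and $B \in \mathbb{R}^{4,2}$, so $A_F^k B \in \mathbb{R}^{4,2}$ and $M$ must have the compatible block size); no essential difficulty arises beyond this, and the entire content of the argument is the invariance of the reachable subspace under state feedback. I would avoid any attempt to compute $A_F^k B$ entrywise, since that would obscure the structural reason the rank is preserved and reduce a one-line symmetry argument to a tangle of computations.
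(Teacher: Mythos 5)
Your proof is correct, and it takes a genuinely different route from the paper. The paper argues through an explicit transformation matrix: it exhibits the concrete $F$ for which $A+BF$ is the Jacobian of the vaccinated system at $E_{0}$, and then claims an invertible block matrix $T$ satisfying $T\,\mathcal{C}_{\text{new}}=\mathcal{C}$, concluding that the two controllability matrices have equal rank. You instead prove feedback-invariance of the reachable subspace by induction, showing $\operatorname{Im}[B, A_F B,\ldots, A_F^{k}B]=\operatorname{Im}[B, AB,\ldots, A^{k}B]$ for each $k$, with the reverse inclusion obtained from the symmetry $A=A_F+B(-F)$. Your route is the more robust of the two: the paper's matrix $T$ contains blocks such as $F^{2}$ and $F^{3}$, which are not even defined for $F\in\mathbb{R}^{2,4}$ (a $2\times 4$ matrix cannot be squared), and the claimed relation multiplies on the left, whereas the correct identity relates the controllability matrices by \emph{right} multiplication by a block upper-triangular matrix with identity diagonal blocks, e.g.
\begin{equation*}
[B,\; (A+BF)B] \;=\; [B,\; AB]\begin{bmatrix} I & FB \\ 0 & I \end{bmatrix},
\end{equation*}
and so on for higher powers. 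Your subspace induction sidesteps all of this bookkeeping, which is exactly where the paper's write-up goes astray; what you give up is only the explicit certificate matrix and the paper's concrete tie-in to the epidemic model (identifying $A+BF$ with the Jacobian $J(E_{0})$ of the controlled system), which is expository rather than logical content. One small remark: your induction hypothesis must be the full equality $W_k=V_k$ (not just one inclusion), since the step in each direction factors through the other family's span; your write-up does use the equality, so the argument stands as given.
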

\begin{proof}
	Note that \[ A = \begin{bmatrix} -\mu & 0 & 0 & 0 \\ 0 & 0 - (\gamma + \mu + \eta) & 0 & 0 \\ 0 & (\eta - \epsilon) & -(\rho + \mu) & 0 \\ 0 & \gamma & \rho & -\mu \end{bmatrix} \]
	and
	\[ B = \begin{bmatrix} 1 & 0  \\ 0 & 1  \\ 0 & 0 \\ 0 & 0 \end{bmatrix}. \]
	Given 
	\[F =-\begin{bmatrix} v& \frac{\alpha \Delta}{\mu+v}  &0&0\\ 0 &  -\frac{\alpha \Delta}{\mu+v}&0 &0 \end{bmatrix}\]
	by $(\ref{eq6})$ assume that the control $u$ is
	\[ u = -\begin{bmatrix} v& \frac{\alpha \Delta}{\mu+v}  &0&0\\ 0 &  -\frac{\alpha \Delta}{\mu+v}&0 &0 \end{bmatrix} \begin{bmatrix} S\\ I \\ Q \\ R\end{bmatrix}, \]
	that result
	\[ u = \begin{bmatrix} -vS - \frac{\alpha \Delta}{\mu+v}I  \\  \frac{\alpha \Delta}{\mu+v}I \end{bmatrix}. \]
	Consequently, we obtain
	\[ (A+BF) = \begin{bmatrix} -\mu-v  & -\frac{\alpha \Delta}{\mu+v} & 0 & 0 \\ 0 & \frac{\alpha \Delta}{\mu +v } - (\gamma + \mu + \eta) & 0 & 0 \\ 0 & (\eta - \epsilon) & -(\rho + \mu) & 0 \\ 0 & \gamma & \rho & -\mu \end{bmatrix}, \]
	note that $(A+BF)$ is the Jacobian matrix of the system $(\ref{sist_vac})$ at point $E_{0}$ with the presence of the control perturbation. To prove the given theorem, we will use the Kalman criterion for the controllability of linear systems.
	The Kalman criterion states that a linear system is completely controllable if its controllability matrix:
	\[ \mathcal{C} = [B \quad AB \quad A^2B \quad A^3B] \]
	has full rank, meaning its rank equals the dimension of the state space.
	Now, let's use the Kalman criterion to prove the given theorem.
	Given a system \( (A, B) \) that is completely controllable, we know that the controllability matrix \( \mathcal{C} \) has full rank, which means the rank of \( \mathcal{C} \) equals the dimension of the state space.
	Let the dimension of the state space equal \( 4 \). Then, the rank of \( \mathcal{C} \) is \( 4 \).
	Now, consider the system \( (A + BF, B) \), where \( F \) is a control matrix.
	The controllability matrix of this system is:
	\[ \mathcal{C}_{\text{new}} = [B \quad (A+BF)B \quad (A+BF)^2B \quad (A+BF)^3B] \]
	Now, we need to show that the rank of \( \mathcal{C}_{\text{new}} \) is equal to \( 4 \), i.e., full rank. If we can show this, then the system \( (A + BF, B) \) will be completely controllable.
	To do this, we will use the property that the rank of a matrix is not changed when we multiply the matrix by another invertible matrix on the left. We will use this property to show that the rank of \( \mathcal{C}_{\text{new}} \) is equal to \( 4 \).
	Consider the matrix \( T \) defined as:
	\[ T = \begin{bmatrix} I & 0 & 0 & 0 \\ F & I & 0 & 0 \\ F^2 & 2BF & I & 0 \\ F^3 & 3BF^2 & 3B(F^2+BF+I) & I \end{bmatrix} \]
	Where \( I \) is the identity matrix. It is easy to see that \( T \) is an invertible matrix. 
	Furthermore, if we multiply \( T \) by the matrix \( \mathcal{C}_{\text{new}} \), we obtain \( \mathcal{C} \).
	\[ T\mathcal{C}_{\text{new}} = [B \quad AB \quad A^2B \quad A^3B] = \mathcal{C} \]
	Since \( \mathcal{C} \) has full rank (equal to \( n \)), then \( T\mathcal{C}_{\text{new}} \) also has full rank.
	Therefore, the rank of \( \mathcal{C}_{\text{new}} \) is equal to \( 4 \), and thus the system \( (A + BF, B) \) is completely controllable.
	Thus, using the Kalman criterion, we have proven that for every matrix \( F \), the system \( (A + BF, B) \) is completely controllable, provided that \( (A, B) \) is completely controllable. In conclusion, the calculation of the matrix \((A+BF)\) is shown in \textbf{Algorithm 1} with the appropriate changes to the input values. This concludes the proof of the theorem. 
\end{proof}

\section{Optimal Control Model}

In this section, we associate the control problem $(\ref{sist_vac})$ with a function that is intrinsically related to solving the system problem. This relationship is described through the optimality principle of a dynamic system. We want to find a control function that minimizes or maximizes a cost functional, while satisfying the constraints imposed by the system. Thus, the following optimal control variable is given: The variable $u(t)$ represents vaccination, as seen previously. In this context, optimal control theory provides a powerful framework for designing control strategies that minimize the spread of infectious diseases while considering various constraints and objectives. By formulating the problem as an optimization task, optimal control theory allows us to determine the most effective allocation of control measures over time to achieve specific objectives, such as minimizing the number of infections, reducing economic losses, or optimizing the use of healthcare resources.

We treat a special case in the optimal control of systems, in
which the state diferential equations are linear in $x$ and $u$ and the objective
functional is quadratic.

Let $T>0$ fixed. Given $t_{0} \in [0,T]$ and $x_{0} \in \mathbb{R}^{n}$, considering
a dynamic system described by the following differential equations:
\begin{eqnarray}\label{eq8}
	x'(t) &= &Ax(t)+Bu(t), \; t \in [0,T]\\ \nonumber
	x(t_{0})&=&x_{0},
\end{eqnarray}
consider the cost functional
\begin{equation}\label{eq9}
	J(u,x) = \frac{1}{2}\left[\int_{0}^{T}x^\mathbf{T}(t)G(t)x(t) + u^\mathbf{T}(t)R(t)u(t)dt\right]  
\end{equation}
where the  matrices $G(t)$ and $R(t)$ are sizes $n\times n $, $m\times m$ respectively, $G(t)$ being positive semidefinite and $R(t)$ being positive definite for all $t \in [0,T]$. The positive definite property guarantees $R(t)$ is invertible. The superscript $\mathbf{T}$ refers to transpose of the matrix. The set of admissible control. 
\begin{equation}\label{eq10}
	\mathcal{U}_{ad}:=\left\{u \in L^{1}([0,T];\mathbb{R}^{m}); u(t) \in  \mathcal{X}\subset{\mathbb{R}^{m}}\; a.e\; in \;  [t_{0},T]. \right\}
\end{equation}
\begin{definition}
	The optimal value function is the application $V:[0,T] \times \mathbb{R}^{n} \rightarrow \mathbb{R}$ defined by 
	\begin{equation}
		V(t_{0},x_{0}):= inf\left\{J(t_{0},x_{0}; u); \; u \in \mathcal{U}_{ad}\right\}.		
	\end{equation}
\end{definition}

\begin{definition}
	The Optimal Control problem is to find for a given initial condition
	$x_{0}$, the control $u^{\ast} \in \mathcal{U}_{ad}$ that  minimizes the cost fucntional $(\ref{eq9})$. Furthermore,
	\[V(t_{0},x_{0})=J(t_{0},x_{0}; u^{\ast}).\]
\end{definition}

With the objective of illustrating the ideas presented here, we consider the control problem $(\ref{eq8})$ to $(\ref{eq9})$. The Hamiltonian becomes
\begin{equation}\label{eq11}
	H(t,x,u,\lambda) :=\frac{1}{2}x^\mathbf{T}(t) Gx(t) + \frac{1}{2}u^\mathbf{T} R u + \lambda^T(Ax(t) + Bu).
\end{equation} 
From of the Hamiltonian we have the optimality equation
is Derived from the term \( u^\mathbf{T} R u \) with respect to \( u \):
\[ \frac{\partial}{\partial u} \left( u^\mathbf{T} R u \right) = \frac{\partial}{\partial u} \left( \sum_{i=1}^{n} \sum_{j=1}^{n} u_i R_{ij} u_j \right) \]
\[ = \frac{\partial}{\partial u} \left( \sum_{i=1}^{n} \sum_{j=1}^{n} u_i \frac{1}{2}(R_{ij} u_j + R_{ji} u_i) \right) \]
\[ = \frac{1}{2} \left( R + R^\mathbf{T} \right) u \]
\[ = R u \]

Derived from the term \( \lambda^\mathbf{T}(Ax(t) + Bu) \) with respect to \( u \):
\[ \frac{\partial}{\partial u} \left( \lambda^\mathbf{T}(Ax(t) + Bu) \right) = \frac{\partial}{\partial u} \left( \lambda^\mathbf{T} Bu \right) \]
\[ = B^\mathbf{T} \lambda \]

Therefore, the partial derivative of \( H \) ith respect to \( u \) is:
\[ \frac{\partial H}{\partial u} = R u + B^\mathbf{T} \lambda,\]
whence it follows that
\begin{equation}\label{eq12}
	u^{\ast}=-R^{-1}B^{\mathbf{T}}\lambda
\end{equation}
we have that
\begin{equation}
	\mathcal{H}(t,x, \lambda) = \stackrel{{\Large min}}{{\scriptsize u \in \mathcal{X}}} \left\lbrace \left\langle \lambda^{\mathbf{T}},Ax+Bu\right\rangle  + \frac{1}{2}\left[x^\mathbf{T}Gx + u^\mathbf{T}Ru\right] \right\rbrace,
\end{equation}
then using the \textit{Hamilton-Jokobi-Bellman} optimality equation
\begin{eqnarray}\label{eq13}
	\frac{\partial V}{\partial t}+ \left\langle \frac{\partial V}{\partial x},Ax\right\rangle- \frac{1}{2} \left\langle\frac{\partial V}{\partial x},BR^{-1}B^{\mathbf{T}}\frac{\partial V}{\partial x} \right\rangle + \frac{1}{2}\left\langle x^\mathbf{T},Gx\right\rangle=0
\end{eqnarray}
Let us now make the most important development hypothesis, which allows us to determine $V$. Suppose the value function for the linear quadratic problem has the form:
\begin{equation}\label{eq14}
	V(t,x):=\frac{1}{2}\left\langle x, P(t)x \right\rangle, \; (t,x) \in [0,T]\times \mathbb{R}^{n},
\end{equation}
where we assume that $P: [0,T]\rightarrow \mathbb{R}^{n,n}$ is continuously differentiable, as the cost function is non-decreasing, we can affirm that $P(t)$ it is positive defined for all $t \in [0,T]$. The assumptions of symmetry for $G$, $R$ are buried in the above calculations. Instead of using $\lambda$, wen find a matrix function P(t) such that $\lambda(t) =\frac{\partial V}{\partial x}= P(t)x(t)$.
Substituting the expression for $V$ into the \textit{HJB} equation, we obtain
\begin{equation}\label{eq15}
	\left\langle x, Y(t)x\right\rangle =0,
\end{equation} 
where
\begin{equation}\label{eq16}
	Y(t)= P'(t)+P(t)A+A^{\mathbf{T}}P(t)-P(t)BR^{-1}B^{\mathbf{T}}P(t)+G.
\end{equation}
the problem now is to find a matrix function $P$ such that $Y(t)\equiv 0$. The following theorem guarantees this result.
\begin{theorem}\label{theorem 7.1}
	Let \( P(t) \) be a continuous and differentiable symmetric matrix with respect to time \( t \) on an interval \( [0, T] \). Consider the Riccati equation: 
	\begin{equation}\label{eq17}
		\frac{\partial P(t)}{\partial t} = -A^T P(t) - P(t)A + P(t)B R^{-1} B^T P(t) - G
	\end{equation}
	where \( A \) is a constant matrix of size \( n \times n \), \( B \) is a constant matrix of size \( n \times m \), \( R \) is a positive definite  matrix of size \( m \times m \), and \( G \) is a constant symmetric matrix of size \( n \times n \), where the optimal control is of the form
	\begin{equation}\label{eq18}
		u^{\ast}=-R^{-1}B^{\mathbf{T}}\lambda.
	\end{equation}
	Then, for each initial condition \( P(0) = P_0 \), there exists a unique solution \( P(t) \) to the Riccati equation defined on \( [0, T] \) associated with the control system $(\ref{eq8})$ to $(\ref{eq9})$.
\end{theorem}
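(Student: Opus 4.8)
The plan is to read (\ref{eq17}) as an initial value problem in the finite-dimensional vector space of $n\times n$ symmetric matrices and to combine the local Cauchy--Lipschitz theory with an argument that excludes finite-time blow-up on $[0,T]$. Write the right-hand side as
\[
F(P):=-A^{\mathbf{T}}P-PA+PBR^{-1}B^{\mathbf{T}}P-G .
\]
First I would note that $F$ is a quadratic polynomial in the entries of $P$, hence of class $C^{\infty}$ and in particular locally Lipschitz on every bounded set; by the Picard--Lindel\"of theorem the datum $P(0)=P_{0}$ determines a unique maximal solution on some interval $[0,T_{\max})$. Next I would check that the flow preserves symmetry: since $R$ (hence $R^{-1}$) and $G$ are symmetric, transposing gives $F(P)^{\mathbf{T}}=F(P^{\mathbf{T}})$, so if $P_{0}=P_{0}^{\mathbf{T}}$ then $P(t)^{\mathbf{T}}$ solves the same problem and uniqueness forces $P(t)=P(t)^{\mathbf{T}}$. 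This legitimizes the symmetry hypothesis of the statement and is entirely routine.

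The substantive point is to prove $T_{\max}>T$, because the quadratic term $PBR^{-1}B^{\mathbf{T}}P$ is exactly the mechanism by which a scalar Riccati equation such as $\dot y=y^{2}$ escapes to infinity in finite time. I would neutralize this difficulty by linearizing the equation through its associated Hamiltonian matrix
\[
\mathcal{M}:=\begin{bmatrix} A & -BR^{-1}B^{\mathbf{T}}\\ -G & -A^{\mathbf{T}}\end{bmatrix},
\]
and letting $(U(t),W(t))$ be the solution of the linear constant-coefficient system $\frac{d}{dt}(U,W)^{\mathbf{T}}=\mathcal{M}\,(U,W)^{\mathbf{T}}$ with $U(0)=I$, $W(0)=P_{0}$. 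Being linear, this system has a solution on all of $[0,T]$, given explicitly by $e^{t\mathcal{M}}$. A direct computation -- differentiate the identity $W=PU$ and substitute the linear dynamics -- shows that $P(t):=W(t)U(t)^{-1}$ satisfies (\ref{eq17}) with $P(0)=P_{0}$ on every subinterval where $U(t)$ is invertible; by the uniqueness established above, this $P$ coincides there with the maximal solution.

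It then remains only to verify that $U(t)$ stays invertible on $[0,T]$, which I expect to be the principal obstacle (the classical ``no conjugate point'' condition). Here the hypothesis that $P(t)$ is the matrix \emph{associated with the control system} (\ref{eq8})--(\ref{eq9}) is decisive: the value function $V(t_{0},x_{0})=\tfrac12\langle x_{0},P(t_{0})x_{0}\rangle$ is finite for every $x_{0}$ and every $t_{0}\in[0,T]$, since already the admissible choice $u\equiv 0$ yields the finite cost $\tfrac12\int_{t_{0}}^{T}(e^{A(s-t_{0})}x_{0})^{\mathbf{T}}G\,e^{A(s-t_{0})}x_{0}\,ds$, while the cost functional (\ref{eq9}) is bounded below by $0$ because $G$ is positive semidefinite and $R$ is positive definite. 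These two bounds are uniform for $t_{0}\in[0,T]$ and $|x_{0}|\le 1$, so they furnish an a priori bound on $\|P(t)\|$, which prevents $U(t)^{-1}$ from diverging and hence keeps $U(t)$ nonsingular throughout $[0,T]$. Combining the global linear existence with this invertibility yields a solution on all of $[0,T]$, and the local uniqueness established above upgrades to uniqueness on the whole interval, which completes the proof.
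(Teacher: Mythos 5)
Your local analysis is sound: the right-hand side of (\ref{eq17}) is polynomial in the entries of $P$, hence locally Lipschitz, so Picard--Lindel\"of gives a unique maximal solution; symmetry propagates by your uniqueness argument; and the Hamiltonian (Radon) linearization $P=WU^{-1}$, with $(U,W)$ driven by $\mathcal{M}$, does reproduce (\ref{eq17}) -- the signs check out -- with existence of $P$ on an interval equivalent to invertibility of $U$ there. Note also that the paper itself offers no proof of this theorem (it only cites a textbook), so yours is the only actual argument on the table.

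The gap is in the final, decisive step. The a priori bound you invoke, $0\le V(t_0,x_0)\le \frac{1}{2}\int_{t_0}^{T}\left(e^{A(s-t_0)}x_0\right)^{\mathbf{T}}G\,e^{A(s-t_0)}x_0\,ds$, is a bound on the value function of problem (\ref{eq8})--(\ref{eq9}), i.e.\ on the solution $\Pi(t)$ of (\ref{eq17}) with the \emph{terminal} condition $\Pi(T)=0$ (there is no terminal cost in (\ref{eq9})). It says nothing about the solution $P(t)$ issued from an \emph{arbitrary} initial condition $P(0)=P_0$: for such $P_0$ the quadratic form $\frac{1}{2}\langle x,P(t)x\rangle$ is not the value function of any control problem, so neither the lower bound (nonnegativity of the cost) nor the upper bound (cost of $u\equiv0$) applies to it; transferring those bounds to $P(t)$ is a non sequitur. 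In fact no repair is possible, because the statement in this forward-in-time, arbitrary-data form is false: take $n=m=1$, $A=0$, $B=1$, $R=1$ (positive definite), $G=0$ (symmetric), and $P_0=p_0>1/T$. Then (\ref{eq17}) reads $\dot p=p^{2}$, whose solution $p(t)=p_0/(1-p_0t)$ blows up at $t=1/p_0<T$; equivalently, in your linearization $U(t)=1-p_0t$ becomes singular before $T$, while the value-function solution is $\Pi\equiv0$. What your value-function bounds actually prove is the classical theorem in which the data are prescribed at $t=T$ with $P(T)\succeq0$ and existence is obtained backward on $[0,T]$. So your machinery is the right machinery, but it establishes a differently-posed statement, and the step that keeps $U(t)$ nonsingular for arbitrary $P_0$ is exactly the step that cannot be made.
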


Simple ODE techniques can be used to solve the problem because, once the \textit{Riccati} matrix equation for $P$ is solved, the control is given by an equation in $x$, and $x$ is given by an ODE in $u$. The proof for this theorem can be seen in detail in \cite{baumeister}.

Now, we consider the control system (\ref{eq8}) whose governing matrix
\[ A = \begin{bmatrix} -\mu & 0 & 0 & 0 \\ 0 & - (\gamma + \mu + \eta) & 0 & 0 \\ 0 & (\eta - \epsilon) & -(\rho + \mu) & 0 \\ 0 & \gamma & \rho & -\mu \end{bmatrix} \]
The control operator is assumed to be of form
\[ B = \begin{bmatrix} 1 & 0  \\ 0 & 1  \\ 0 & 0 \\ 0 & 0 \end{bmatrix}, \]
and
\begin{minipage}{0.4\textwidth}
	\[ R = \begin{bmatrix} 2 & 0  \\ 0 & 2   \end{bmatrix}, \]
\end{minipage}
\begin{minipage}{0.3\textwidth}
	\[ G = \begin{bmatrix} 
		1 & 0 & 0 & 0 \\ 
		0 & 1 & 0 & 0 \\
		0 & 0& 0 & 0 \\ 
		0 & 0& 0 & 0\end{bmatrix}. \]
\end{minipage}

The $(\ref{eq8})$ system is added to a control measure to decrease the transmission of COVID-19, in which the optimal control policy
$u^{\ast}=-R^{-1}B^{\mathbf{T}}\lambda$ determine how the system should be controlled to minimize the cost function. The main objective of optimal control is to reduce the number of individuals susceptible $S(t)$ to and infected $I(t)$ with COVID-19 in the population, while also reducing the overall cost of controlling the disease dynamics. Then, the cost functional $(\ref{eq9})$ can be rewritten as:
\begin{equation}
	J(u,x) = \frac{1}{2}\left[\int_{0}^{T}(S^{2}+I^{2} + u^{2})dt\right],
\end{equation}
with, $u^{*}=-[1/2\lambda \; \; 1/2\lambda]^{\mathbf{T}}.$

Solving the Riccati equation for internal control $u^{\ast}(t)$:
\begin{equation}\label{eq.riccati}
	\frac{\partial P(t)}{\partial t} = -A^T P(t) - P(t)A + P(t)B R^{-1} B^T P(t) - G.
\end{equation}

For this example we specify the following ingredients: $T=30$;
$ \mu = 0.02$; $\alpha = 0.2$; $\Delta = 0.2$; $\gamma = 0.1$; $\eta = 0.2$; $\epsilon = 0.1$; $\rho = 0.3$; $\lambda = 1.5 $. And using the Runge-Kutta fourth order method we solve the equation, see figure $(\ref{fig:3})$.

\begin{figure}[ht]
	\centering
	\includegraphics[width=0.8\textwidth]{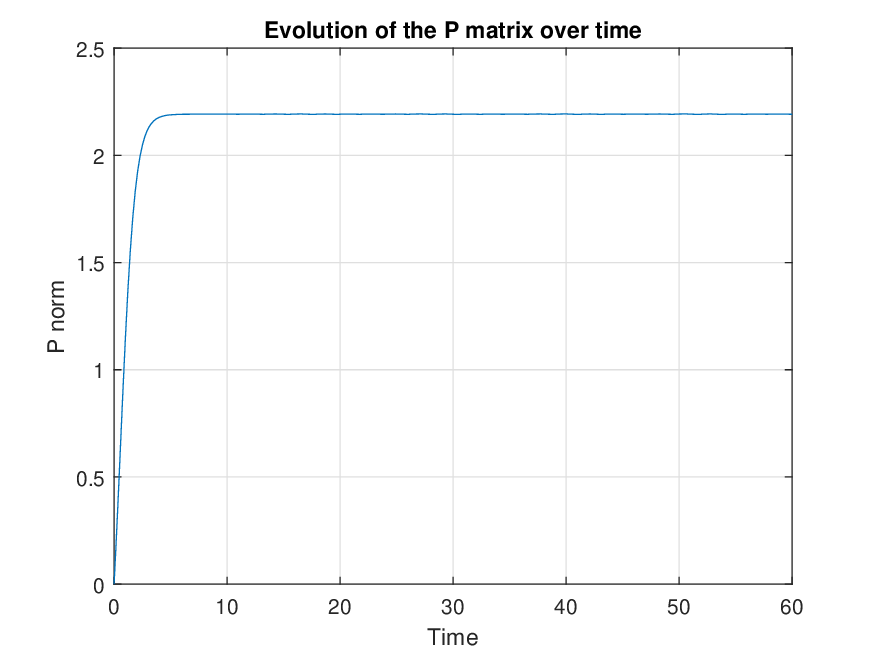}
	\caption{\label{fig:3} Evolution of the P norm}
\end{figure}
The \(P(t)\) matrix, obtained as the solution of the Riccati equation, is directly related to optimal control. This tool is used to find the control input that minimizes the cost of the system over time. The \(P(t)\) matrix norm in the context of the Riccati equation provides information about the stability of the controlled system, indicating whether or not it has stabilized as it converges to a constant value over time. The graph shows, see figure $(\ref{fig:3})$, this evolution by indicating how optimal control also converges and ensures both stability and adequate performance for the entire controlled system. In the next section we will show the optimal control numerically.

\section{Numerical Simulations}

We will see numerical simulations in this section, to demonstrate the dynamic characteristics of the model, including the stability of the equilibrium points and optimal control. Using the MATLAB\textsuperscript{\textregistered} software, we perform numerical simulation in the model $(\ref{sist_vac})$ and estimate the basic values of the model parameters. We will see the results of stabilization to the endemic and disease-free equilibrium points. We show how to solve the suggested optimal control problem. Let us simulate and compare different situations to control the spread of COVID-19. The results of the simulation are shown in the following diagram.\\

In system (\ref{sist_vac}), let $\gamma = 0.1$, $\mu = 0.02$, $\rho = 0.3$, $\epsilon = 0.1$, $\eta = 0.2$, $\Delta = 0.2$, and $v = 0.05$. When $\alpha=0.08 $ we have $R_{0} = 0.7143 < 1$, and with the initial condition $(9,1,0,0)$ the solution converge to the free-disease solution $(2.8571,0,0,0.0041)$. The numerical simulation is shown in the figure $\ref{fig:4}$. From Theorem \ref{theorem 4.2}, we notice that $E_{0}$ is globally asymptotically stable.
\begin{figure}[h!]
	\centering
	\includegraphics[width=0.8\textwidth]{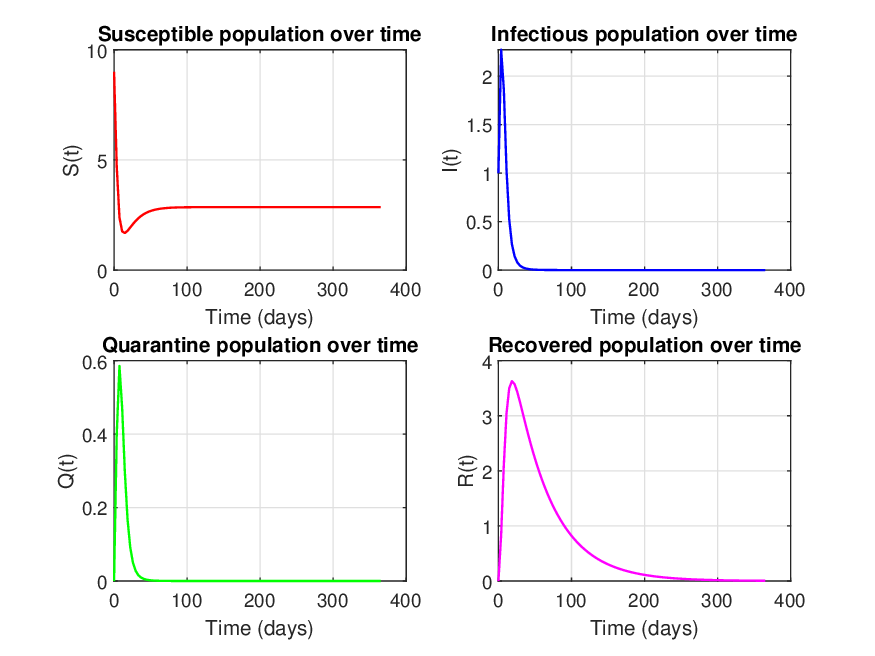}
	\caption{\label{fig:4}Variational curves of $S$, $I$, $Q$, and $R$}
\end{figure}

Now, in system (\ref{sist_vac}), let $\gamma = 0.1$, $\mu = 0.02$, $\rho = 0.3$, $\epsilon = 0.1$, $\eta = 0.2$, $\Delta = 0.2$, and $v = 0.05$. When $\alpha=0.2 $ we have $R_{0} = 1.7857 > 1$, and with the initial condition $(9,1,0,0)$ the solution converge to the endemic disease solution $(1.6,0.275,0.0859,2.6660)$.  The numerical simulation is shown in the figure $\ref{fig:5}$. From Theorem \ref{theorem 4.4}, we notice that $E^{*}$ is globally asymptotically stable.
\begin{figure}[h!]
	\centering
	\includegraphics[width=0.8\textwidth]{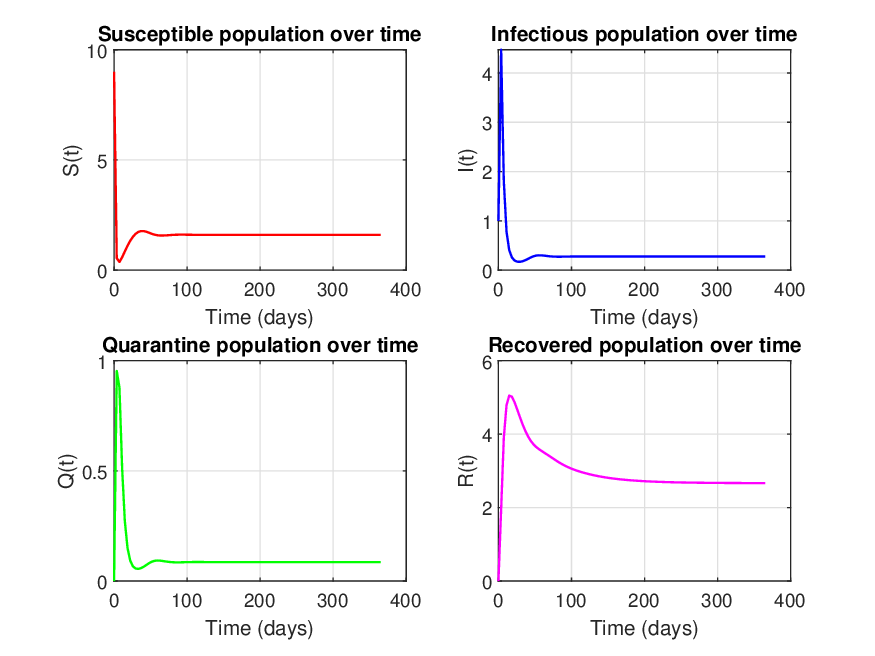}
	\caption{\label{fig:5}Variational curves of $S$, $I$, $Q$, and $R$}
\end{figure}

Additionally, we set the same initial conditions and parameters as in the previous simulation, and we get the following examples. The quarantine-free $(\eta=0)$ and vaccination-free $(v=0)$ model's reproduction number $R_{0}$ is $16.6667>1$, as shown in the numerical simulation in the figure $\ref{fig:6}$. The reproduction number $R_{0}$ vaccination-free $(v = 0)$ model is $R_{0}=6.2500>1$, the numerical simulation is shown in Figure $\ref{fig:7}$. The reproduction number $R_{0}$ quarantine-free $(\eta = 0)$ model is $R_{0}=4.7619>1$, the numerical
simulation is shown in Figure $\ref{fig:8}$.
\begin{figure}[h!]
	\centering
	\includegraphics[width=0.8\textwidth]{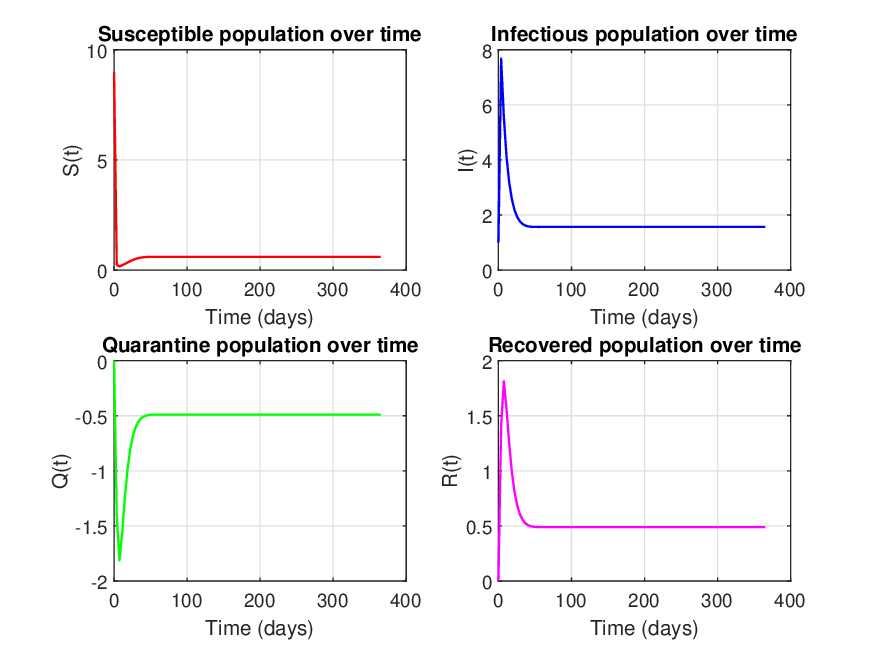}
	\caption{\label{fig:6}Variational curves of $S$, $I$, $Q$, and $R$ with $R_{0}=16.6667>1$}
\end{figure}

\begin{figure}[h!]
	\centering
	\includegraphics[width=0.8\textwidth]{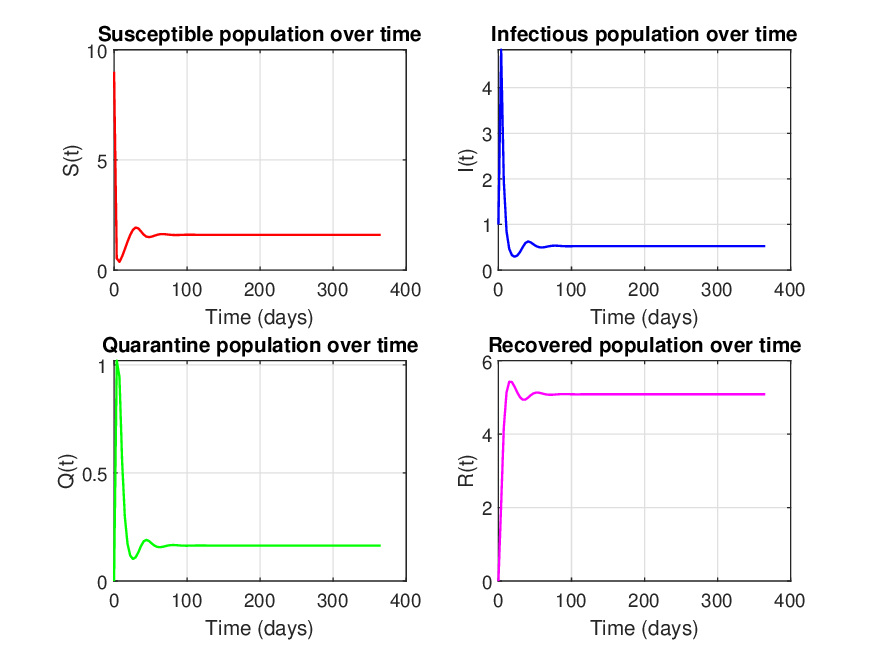}
	\caption{\label{fig:7}Variational curves of $S$, $I$, $Q$, and $R$ with $R_{0}=6.2500$ and $v=0$}
\end{figure}

\begin{figure}[h!]
	\centering
	\includegraphics[width=0.8\textwidth]{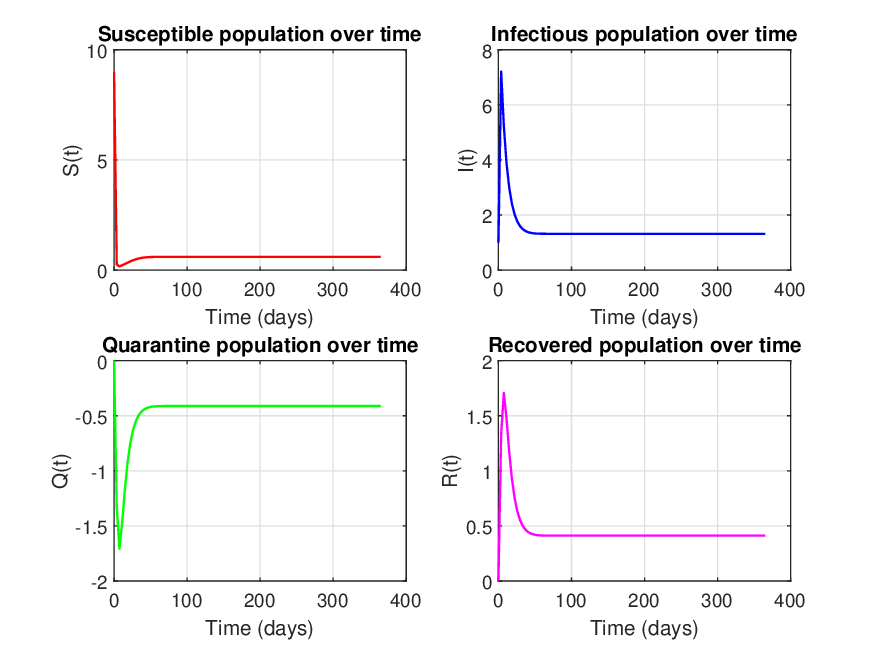}
	\caption{\label{fig:8}Variational curves of $S$, $I$, $Q$, and $R$ with $R_{0}=4.7619$ and $\eta =0$}
\end{figure}

Now, we have the evolution of the state variables without the presence of the optimal control strategy. Considering the Riccati equation $(\ref{eq.riccati})$, let $T=30$, $\gamma = 0.1$, $\mu = 0.02$, $\rho = 0.3$, $\epsilon = 0.1$, $\eta = 0.2$, $\Delta = 0.2$, and $v = 0.05$. When $\alpha=0.2$ we have $R_{0} = 1.7857 > 1$, and with the initial condition $(9,1,0,0)$,  the numerical simulation is shown in Figure $\ref{fig:9}$.

\begin{figure}[h!]
	\centering
	\includegraphics[width=0.7\textwidth]{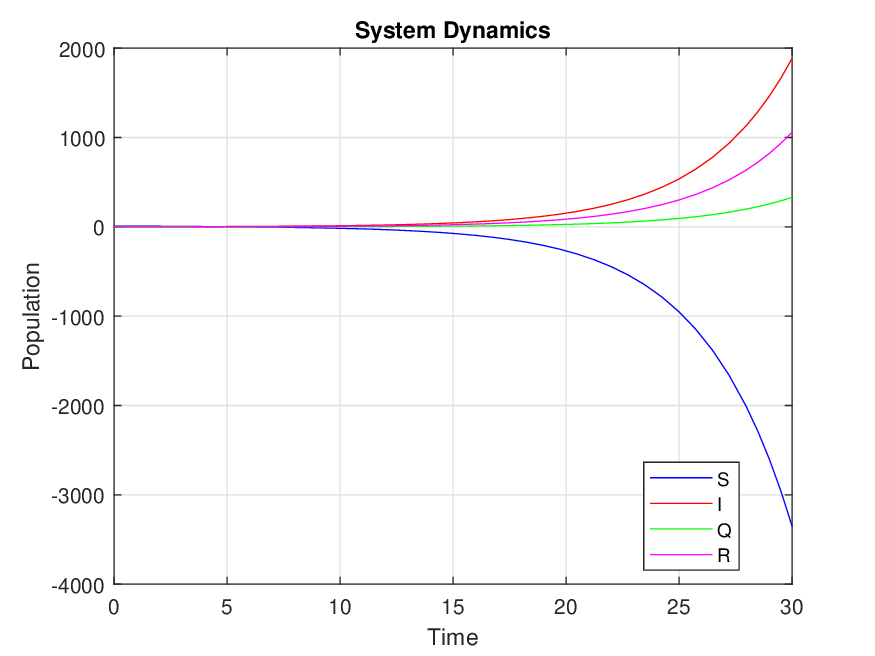}
	\caption{\label{fig:9}Variational curves of $S$, $I$, $Q$, and $R$  without optimal control }
\end{figure}

In this next simulation, we will perform numerical simulations for an optimal control strategy given by the theorem $\ref{theorem 7.1}$. Considering the Riccati equation $(\ref{eq.riccati})$, let $T=180$, $\gamma = 0.1$, $\mu = 0.02$, $\rho = 0.3$, $\epsilon = 0.1$, $\eta = 0.2$, $\Delta = 0.2$, and $v = 0.05$ and $\lambda=0.5$. When $\alpha=0.2 $ we have $R_{0} = 1.7857 > 1$, and with the initial condition $(9,1,0,0)$,  the numerical simulation is shown in Figure $\ref{fig:10}$.

\begin{figure}[h!]
	\centering
	\includegraphics[width=0.7\textwidth]{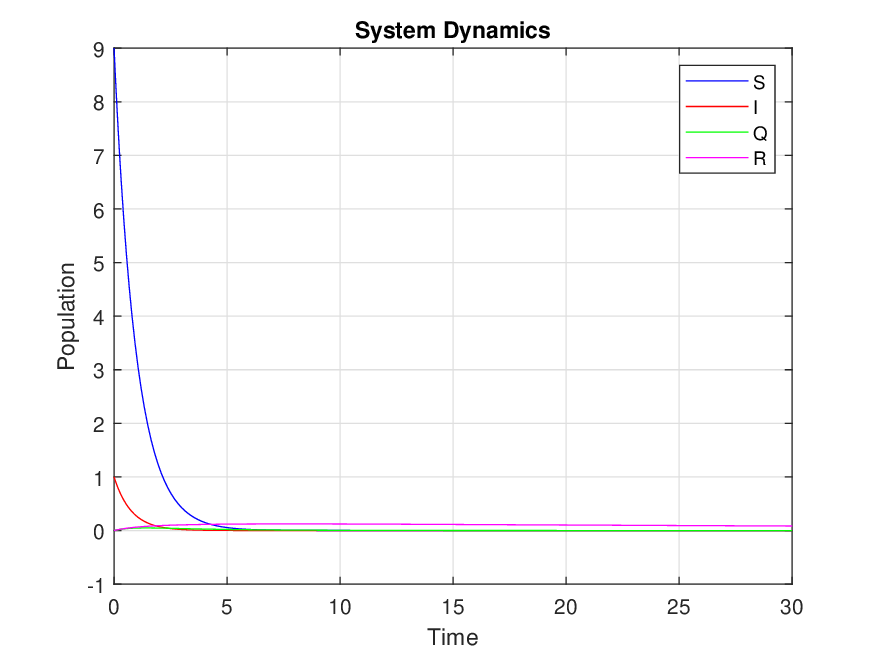}
	\caption{\label{fig:10}Variational curves of $S$, $I$, $Q$, and $R$ with whith Optimal Control}
\end{figure}

\Section{Results}
In this section we will discuss and analyze the patterns of the spread waves of Covid-19 presented in the numerical simulations previously. To do this analysis we will take as a parameter the basic reproduction number, $R_{0}$. 
We have from  the Theorem $\ref{theorem 4.1}$ that if $R_{0}<1$ the solutions converge to the disease-free equilibrium of the system $(\ref {sist_vac})$, we can see in figure $\ref{fig:4}$. This means that when the number of newly infected $I(t)$ is zero in the system, the disease-free equilibrium point occurs. This can occur when the number of individuals susceptible to $S(t)$ is small as a result of vaccination or when the amount of infected $I$ recovered is small enough to prevent the disease from continuously spreading in the population. This confirms the fact that the system is also controllable, see Theorem $\ref{theorem 5.2}$, besides, a system is controllable if, and only if it is stabilized, see \cite{micu}.
On the other hand, when $R_{0}>1$ we have the system converge to endemic equilibrium solution of the system $ (1.6,0.275,0.0859,2.6660)$, see figure $\ref{fig:5}$. In the system $(\ref{sist_vac})$, the endemic stabilization point occurs when the number of people entering the $I(t)$ compartment is equal to the numbers of people leaving the compartment for reasons of $Q(t)$ quarantine, $R(t)$ recovery, or death. 
In other words, at the endemic equilibrium point, the number of new cases is equal to the numbers of recovered or quarantined cases, thusining a balance in the amount of infected cases in the population over time. Actually, 
$$\alpha S I=\mu I+ \gamma I + \eta I=0.088.$$ 

Note that $R_{0}$ depends as much on $v$ as on $\eta$, note that $\frac{\partial R_ {0}}{\partial v}> \frac {\partial R_{ 0}}{ \partial \eta}$, in fact: 
\begin{equation*}
	\frac{\partial R_{0}}{\partial v}=\frac {\Delta \alpha}{(\mu+v)^{2}(\gamma +\mu + \eta)}
\end{equation*}
and
\begin{equation}
	\frac{\partial R_{0}}{\partial \eta}=\frac {\Delta \alpha}{(\mu+v)(\gamma +\mu + \eta)^{2}}
\end{equation}
where $\frac{\partial R_{0}}{\partial v}=1.0449
$ and $\frac{\partial R_{0}}{\partial \eta}=0.0234$.
Continuing our analysis, we can see in the $\ref{fig:7}$ simulation, that when the system presents $v=0$ and $\eta\neq 0$ the number of reproductions increases approximately $1.3$ more than when considered $ \eta=0 $ and $v$ with a 5 percent vaccination rate, see the simulation $\ref{fig:8}$. We notice that the number of basic reproduction increases very quickly when we consider $v=0$ and $\eta =0$, see $\ref{fig:6}$, where we do not consider any kind of control over the spread of the disease. From this we conclude that the presence of the vaccine is more effective as a control strategy than just control of the population of infectious individuals.

From the analyses we note that isolating infected individuals can be considered as a disease spread control strategy, but, as our main objective is to verify the effectiveness of the vaccine use strategy, we will take the isolation rate of infected persons constantly equal to $\eta$ and we will vary the population's vaccination rate for optimal control simulations.
Finally we show the optimal control, I take into account the $\ref{theorem 7.1}$ theorem that provides us with an optimal control strategy $u^{\ast}=-R^{-1}B^{ \mathbf{T}}\lambda$ of which minimizes the functional cost $(\ref {eq9})$ that gives us the performance of the system over time. Considering $R_{0}>1$, we have here a context of spread of the disease, so, we analyze this scenario without the presence of an optimal control strategy, see figure $\ref{fig:9}$, here we can see how the state variables are unstable, it is only possible to obtain a stability of the system with the increase of the percentage of the vaccinated population, i.e., from $35\%$ of the immunized population that we managed to keep the spread curves stable. On the other hand, when we use the optimum control strategies $u^{\ast}$ it is possible to see a better system performance and consequently a reduction in the cost, in the sense that, we can the same results with less effort in the application of control, see figure $\ref{fig:10}$. Therefore, the vaccine presents itself as a system state refueling control strategy, effective in combating the spread of Covid-19.

\Section{Conclusion}
In this article, we study the behavior of COVID-19 spread by a compartimental SIQR model, with vaccination as the main strategy of prevention.  We studied the balance points of the model and its stability, controlability, and optimal control for the system, finally presented some numerical simulations to corroborate the theoretical results.  From balance points and stability analysis, we learned about two constant solutions, one was the disease disappear quickly and the other continued to infect a small portion of the population, depending on the value of $R_{0}$.   Using the simulations we showed the behavior of the system (\ref{sist_vac}) and also learned that the presence of vaccination can decrease the basic reproductive number faster than the isolation of infectious individuals, eventually bringing the number of infections to the endemic balance. By applying optimal control strategies it was possible to optimize the logistical costs of the vaccine and reach the endemic equilibrium more quickly.
We conclude this work by emphasizing that, for future work, this study can be carried out for the same models combinations of varied control strategies, improving our knowledge about the behavior of this type of system and also our understanding of infectious diseases.

\section*{Acknowledgement}

To the authors of the papers that were bibliographic references.

\bibliography{arqref}

\end{document}